\tikzstyle{vertex}=[circle, draw, inner sep=0pt, minimum size=4.5pt]
\newcommand{\vertex}{\node[vertex]}
\newcounter{Angle}
\newtheorem{theorem}{Theorem}[section]
\newtheorem{corollary}[theorem]{Corollary}
\newtheorem{lemma}[theorem]{Lemma}
\newtheorem{proposition}[theorem]{Proposition}
\newtheorem*{conjecture}{Conjecture}
\newtheorem*{theoremA}{Theorem~A}
\newcommand{\Pal}{P\'{a}lfy's Condition}
\newcommand{\Irr}{{\mathrm {Irr}}}
\newcommand{\cd}{{\mathrm {cd}}}
\newcommand{\Aut}{{\mathrm {Aut}}}
\newcommand{\Centralizer}{{\mathrm {C}}}
\newcommand{\PSL}{{\mathrm {PSL}}}
\newcommand{\PSp}{{\mathrm {PSp}}}
\newcommand{\PSU}{{\mathrm {PSU}}}
\newcommand{\SL}{{\mathrm {SL}}}
\newcommand{\GG}{{\mathscr{G}}}
\theoremstyle{definition}
\begin{document}

\title[Regular prime graphs]{Finite groups whose prime graphs are regular}

\author{Hung P. Tong-Viet}
\email{Tongviet@ukzn.ac.za}
\address{School of Mathematics, Statistics and Computer Science\\
University of KwaZulu-Natal\\
Pietermaritzburg 3209, South Africa}

\subjclass[2000]{Primary 20C15; 05C25}

\date{\today}

\keywords{character degree; prime graph; cubic graph; regular graph}

\begin{abstract}
Let $G$ be a finite group and let $\Irr(G)$ be the set of all  irreducible complex characters of
$G.$ Let $\cd(G)$ be the set of all character degrees of $G$ and denote by $\rho(G)$ the set of
primes which divide some character degrees of $G.$ The prime graph $\Delta(G)$ associated to $G$ is
a graph whose vertex set is $\rho(G)$ and there is an edge between two distinct primes $p$ and $q$
if and only if the product $pq$ divides some character degree of $G.$  In this paper, we show that
the prime graph $\Delta(G)$ of a finite group $G$ is $3$-regular if and only if it is a complete
graph with four vertices.
\end{abstract}

\maketitle

\section{Introduction}

Given a finite group $G,$ let $\Irr(G)$ be the set of all irreducible complex characters of $G$ and
let $\cd(G)=\{\chi(1)\:|\:\chi\in\Irr(G)\}$ be the set of character degrees of $G.$ The set of
primes which divide some character degrees of $G$ is denoted by $\rho(G).$ The {\em prime graph}
$\Delta(G)$ associated to $G$ is a graph whose vertex set is $\rho(G)$ and there is an edge between
two distinct primes $p$ and $q$ in $\rho(G)$ if and only if the product $pq$ divides some character
degree $a\in\cd(G).$ The prime graph $\Delta(G)$ of a finite group $G$ is a useful tool in studying
the character degree set $\cd(G).$ This graph has been studied extensively over the last $20$
years. We refer the readers to a recent survey by M. Lewis \cite{Lewis08} for results concerning
this graph and related topics.

In this paper, we are going to study the following question: Which graphs can occur as the prime
graphs of finite groups? This is one of the basic questions in the character theory of finite
groups. Although a complete answer to this question is yet to be found, many restrictions on the
structure of the prime graph $\Delta(G)$ have been obtained. For example, it is known that
$\Delta(G)$ has at most three connected components and if $\Delta(G)$ is connected, then its
diameter is bounded above by three. (See \cite[Theorems~6.4, 6.5]{Lewis08}). For finite solvable
groups,  \Pal~\cite{Palfy} asserts that given any three distinct primes in $\rho(G),$ there is
always an edge connecting two primes among those primes.  This condition is very useful in
determining which graphs can occur as the prime graphs of finite solvable groups. In particular,
this condition implies that if $G$ is finite solvable, then $\Delta(G)$ has at most two connected
components and if $\Delta(G)$ has exactly two connected components, then each component is
complete. Unfortunately, this condition does not hold true in general. Nevertheless, it was proved
in \cite{Moreto-Tiep} that if $\pi\subseteq \rho(G)$ with $|\pi|\geq 4,$ then there is an edge
connecting two distinct primes in $\pi.$

The main purpose of this paper is to classify all $k$-regular graphs which can occur as $\Delta(G)$
for some finite group $G,$ for $0\leq k\leq 3.$ Recall that a graph $\GG$ is called $k$-regular for
some integer $k\geq 0,$ if every vertex of $\GG$ has the same degree $k.$ Combining results in
\cite{Lewis-White13, Hung}, one can easily classify all $k$-regular prime graphs for $0\leq k\leq
2.$ (See Proposition~\ref{regular graphs of small valency} in Section~\ref{sec2}). In particular,
if $\Delta(G)$ is $2$-regular, then $\Delta(G)$ is a triangle or a square. For $3$-regular graphs,
we obtain the following result.

\begin{theoremA}
The prime graph $\Delta(G)$ of a finite group $G$ is $3$-regular if and only if it is a complete
graph with four vertices.
\end{theoremA}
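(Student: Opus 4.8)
The plan is to prove the two directions separately, with the forward (``if'') direction being immediate and essentially all of the work lying in the converse. For the ``if'' direction, note that $K_4$ is plainly $3$-regular, so it suffices to produce a single finite group $G$ with $\Delta(G)\cong K_4$. Complete graphs are realizable as prime graphs: one exhibits (or cites) a group possessing an irreducible character whose degree is divisible by the product $p_1p_2p_3p_4$ of four distinct primes while $\rho(G)=\{p_1,p_2,p_3,p_4\}$, which forces every pair to be joined and hence $\Delta(G)=K_4$. This settles sufficiency.

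For the converse, suppose $\Delta(G)$ is $3$-regular and write $n=|\rho(G)|$. First I would extract the two combinatorial constraints already available: by \cite{Lewis08} the graph has at most three connected components, and by \MT\ \cite{Moreto-Tiep} no four vertices are pairwise nonadjacent, i.e. the independence number satisfies $\alpha(\Delta(G))\le 3$. A Caro--Wei (greedy) estimate gives $\alpha(\Delta(G))\ge n/(1+3)=n/4$, whence $n\le 12$; since a cubic graph has an even number of vertices, $n\in\{4,6,8,10,12\}$. Moreover each connected component is itself a connected cubic graph and so has at least four vertices, and the equality $\alpha=n/4$ for a regular graph is attained only by a disjoint union of copies of $K_4$. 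Carrying out this bookkeeping over the at most three components reduces $\Delta(G)$ to a short explicit list: besides $K_4$ itself, the only cubic graphs meeting both constraints are the triangular prism and $K_{3,3}$ on six vertices, the disjoint union of two and of three copies of $K_4$, the disjoint union of a $K_4$ with a triangular prism, and the finitely many connected cubic graphs on eight or ten vertices having independence number exactly $3$.

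It then remains to eliminate every candidate other than $K_4$, and here I would split on the solvability of $G$. If $G$ is solvable, \PP\ \cite{Palfy} sharpens the bound to $\alpha(\Delta(G))\le 2$, instantly discarding $K_{3,3}$, the three-fold and mixed disjoint unions, and all the graphs with $\alpha=3$; the surviving solvable possibilities are the prism and the disjoint union of two copies of $K_4$, which I would rule out using the classification of solvable groups with prescribed (in particular, disconnected) character degree graphs. For the disconnected candidate the two components must each be complete, and the Frobenius or double-Frobenius structure this imposes cannot realize a degree divisible by four primes on each side, while the connected prism is excluded by the fine structure theory of solvable degree graphs. If $G$ is nonsolvable, I would pass to the chief factors and the almost simple sections of $G$ and invoke the known character degree graphs of the finite simple groups (via the classification); the nonabelian composition factors contribute a rigid pattern of vertices and edges, and a case check against the tabulated prime graphs of simple and almost simple groups shows that none of the remaining candidate shapes can occur.

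The main obstacle is this last elimination step. The combinatorial reduction is routine, but ruling out the connected candidates---particularly $K_{3,3}$ and the prism---requires genuine group theory: for nonsolvable $G$ one must control how $\Delta(G)$ is assembled from the prime graphs of its nonabelian chief factors and verify, case by case through the classification of finite simple groups, that no such assembly produces a cubic graph on six or more vertices, while for solvable $G$ the prism forces a delicate analysis of the degree structure. I expect the bulk of the argument to be devoted to this structural and case-by-case analysis.
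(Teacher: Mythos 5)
Your combinatorial reduction is essentially sound and runs parallel to the paper's Section~2, just with blunter tools. The paper reaches a shorter list more cheaply: once $\Delta(G)$ is connected, $3$-regular and not $K_4$, it is $K_4$-free, so Brooks' theorem gives $\chi(\Delta(G))\leq 3$ and hence $|\rho(G)|\leq 3\,\alpha(\Delta(G))\leq 9$ (Corollary~\ref{Upper bound}), rather than your Caro--Wei bound of $12$; all disconnected regular candidates (your unions of copies of $K_4$, and $K_4$ plus a prism) are destroyed at once by Lemma~\ref{disconnected regular graphs}, since for solvable groups a disconnected $\Delta(G)$ has two complete components of sizes $n_1\leq n_2$ with $n_2\geq 2^{n_1}-1$, while for nonsolvable groups one component is an isolated vertex; and the triangle guaranteed by \cite[Theorem~A]{Hung} eliminates $K_{3,3}$ (and every triangle-free eight-vertex candidate, such as the Wagner graph) for free, whereas you carry these as hard cases. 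Note also that Brooks' theorem applied to a connected cubic graph on ten vertices forces $\alpha\geq \lceil 10/3\rceil=4$, so your class of ten-vertex connected candidates with $\alpha=3$ is empty --- a check you leave open.

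The genuine gap is that everything after the reduction --- which is where the theorem actually lives, occupying the paper's Sections~\ref{sec3} and~\ref{sec4} --- is asserted rather than argued, and the mechanisms you name would not deliver it. For solvable $G$, no off-the-shelf ``fine structure theory'' excludes the prism: it satisfies \PP, is connected of diameter two, and the paper needs the bespoke Lemma~\ref{cubical graph with 6 vertices}, which takes $N\unlhd G$ maximal with $G/N$ minimal nonabelian, invokes \cite[Lemma~12.3]{Isaacs}, and then either (nonabelian $r$-group quotient) uses Gallagher's theorem to give the prime $r$ degree five, or (Frobenius quotient) runs a divisibility computation on $f\theta(1)$ that manufactures a forbidden triangle, concluding $G'=G''$. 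For nonsolvable $G$, your plan of ``a case check against the tabulated prime graphs of simple and almost simple groups'' fails in principle, because $\Delta(G)$ is not assembled from the prime graphs of the chief factors: the solvable radical $N$ may contribute vertices $\tau=\rho(G)\setminus\pi(G/N)$, and which edges those primes acquire is invisible in any table. The paper must first prove that $G/N$ is almost simple with simple socle (Lemmas~\ref{characteristic simple} and~\ref{solvable radical}), classify the simple groups whose prime graph is $K_4$-free with maximal degree at most three (Lemmas~\ref{simple groups} and~\ref{almost simple groups}, forcing $\PSL_2(q)$, and for six or more vertices $G=S\cong\PSL_2(2^f)$ with $f\geq 10$), and then control the characters $\theta\in\Irr(N)$ lying under the mixed degrees by Clifford theory: extendibility versus \cite[Lemma~4.2]{Hung}, Gallagher's theorem, character-triple isomorphisms with the representation group $\SL_2(q)$, and the fact that no maximal subgroup of $\PSL_2(2^f)$ has prime-power index (the list \eqref{eqn}). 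None of this can be replaced by table lookup, so as it stands your proposal establishes only the reduction to a finite list of graphs, not Theorem~A.
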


Obviously, if $\Delta(G)$ is a complete graph with four vertices, then it is $3$-regular.
Therefore, we mainly  focus on the `only if' part. There are examples of both solvable and
nonsolvable groups whose prime graphs are complete graphs with four vertices. For nonsolvable
groups, we can simply take $G\cong {\rm A}_7,$ the alternating group of degree $7.$ For solvable
groups, we can take $G$ to be a direct product of two solvable groups $H$ and $K,$ where both
$\Delta(H)$  and $\Delta(K)$ are complete graphs with two vertices and $\rho(G)\cap \rho(H)$ is
empty.

We mention that an analogous result for conjugacy class sizes was obtained by Bianchi et al. in
\cite{Bianchi-Herzog}, where the authors proved that the common-divisor graph $\Gamma(G),$ defined
on the set of non-central conjugacy class sizes of a finite group $G$, is $3$-regular if and only
if it is a complete graph with four vertices and they conjectured that $\Gamma(G)$ is a $k$-regular
graph if and only if it is a complete graph with $k+1$ vertices. Recently, this conjecture has been proved in \cite{Bianchi}.

The paper is organized as follows. In Section~\ref{sec2}, we obtain an upper bound for the number
of vertices of the prime graph $\Delta(G)$ of a finite group $G$ in terms of the maximal degree $d$
and the independent number of $\Delta(G)$ under the assumption that $\Delta(G)$ contains no
subgraph isomorphic to a complete graph with $d+1$ vertices. (See Corollary~\ref{Upper bound}).
This result may be useful in studying the prime graphs with bounded degrees. In Section~\ref{sec3},
we prove Theorem~A for solvable groups. This is achieved in Theorem~\ref{3-regular solvable graph}.
Section~\ref{sec4} is devoted to proving Theorem~A for nonsolvable groups. This is the main part of
the paper. Finally, in the last section, for each even integer $k\geq 2,$ we construct a finite
solvable group whose prime graph is $k$-regular with $k+2$ vertices.

All groups in this paper are assumed to be finite, all characters are complex characters and all
graphs are finite, simple, undirected graphs (no loop nor multiple edge). We refer to \cite{Isaacs}
for the notation of character theory of finite groups and to \cite{Bollobas} for terminology in
graph theory. For an integer $n,$ we write $\pi(n)$ for the set of all prime divisors of $n.$ We
write $\pi(G)$ instead of $\pi(|G|)$ for the set of all prime divisors of $|G|.$  If $N\unlhd G$
and $\theta\in\Irr(N),$ then the inertia group of $\theta$ in $G$ is denoted by $I_G(\theta).$
Finally, we write $\Irr(G|\theta)$ for the set of all irreducible constituents of $\theta^G.$

\section{Prime graphs of groups}\label{sec2}
In this section, we recall some graph theoretic terminologies and some known results in both graph
and group theories which will be needed in this paper. We begin with some basic definitions and
results in graph theory.

Let $\mathscr{G}=(V,E)$ be a graph of order $n=|V|$ with vertex set $V$ and edge set $E.$ Let $v$
be a vertex of $\GG.$ The {\em degree} of $v$ is the number of edges of $\GG$ incident to $v.$ A
vertex $v\in V$ is said to be an {\em odd vertex} if its degree is odd. The following elementary
result, which is a consequence of the Hand-Shaking Lemma, is well known.

\begin{lemma}\label{hand shaking}  The number of odd vertices in a graph is even.
\end{lemma}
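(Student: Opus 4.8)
The plan is to derive the statement from the Handshaking Lemma, that is, from the identity $\sum_{v\in V}\deg(v)=2|E|$. First I would establish this identity by a double-counting argument: since the graph is simple and loopless, each edge $e\in E$ is incident to exactly two distinct vertices, so summing the degrees over all vertices counts each edge precisely twice. Hence $\sum_{v\in V}\deg(v)$ equals $2|E|$ and is in particular an even integer.

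Next I would partition the vertex set as $V=V_{\mathrm{even}}\cup V_{\mathrm{odd}}$, where $V_{\mathrm{odd}}$ denotes the set of odd vertices and $V_{\mathrm{even}}$ its complement. Splitting the degree sum along this partition gives $\sum_{v\in V_{\mathrm{even}}}\deg(v)+\sum_{v\in V_{\mathrm{odd}}}\deg(v)=2|E|$. The first summand is even, being a sum of even integers, and the right-hand side is even; therefore the second summand $\sum_{v\in V_{\mathrm{odd}}}\deg(v)$ must be even as well.

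Finally, I would observe that a sum of $|V_{\mathrm{odd}}|$ odd integers is congruent to $|V_{\mathrm{odd}}|$ modulo $2$. Since this sum has just been shown to be even, it follows that $|V_{\mathrm{odd}}|$ is even, which is exactly the assertion that the number of odd vertices is even. There is no genuine obstacle here; the only step demanding any care is the double counting, where the loopless hypothesis is what guarantees each edge contributes exactly $2$ to the total. As the excerpt stipulates that all graphs are simple and loopless, this point is automatic.
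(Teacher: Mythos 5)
Your proof is correct and follows exactly the route the paper indicates: the paper states the lemma without proof as a well-known consequence of the Hand-Shaking identity $\sum_{v\in V}\deg(v)=2|E|$, and your double-counting derivation plus the even/odd partition and parity argument is precisely that standard argument, carried out in full.
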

A graph $\GG$ is called {\em $k$-regular} (or \emph{regular of valency $k$}) for some integer
$k\geq 0,$ if every vertex of $\GG$ has the same degree $k.$ We call a $3$-regular graph a {\em
cubic graph}. From Lemma~\ref{hand shaking}, if $\GG$ is $k$-regular for some odd integer $k\geq
1,$ then the order of $\GG$ must be even since every vertex of $\GG$ is an odd vertex. For an
integer $n\geq 3,$ we denote by $K_n$ a complete graph of order $n,$ that is, a graph with $n$
vertices in which all pairs of distinct vertices are adjacent. A complete graph of order four is
called a {\em complete square}. A graph $\GG$ is said to be {\em $K_n$-free} for some $n\geq 3$ if
$\GG$ has no subgraph isomorphic to $K_n.$ Clearly, $\GG$ is $K_3$-free if and only if $\GG$ has no
triangle. Observe that a connected $k$-regular graph $\GG$ for some $k\geq 2$ is $K_{k+1}$-free if
and only if it is not $K_{k+1}.$ A subset $I$ of $V$ is an {\em independent set} if no two of its
elements are adjacent. The independent number $\alpha(\mathscr{G})$ of $\GG$ is the maximal size of
independent sets in $\mathscr{G}.$ Finally, the {\em chromatic number} of $\mathscr{G},$ denoted by
$\chi(\mathscr{G}),$ is the minimal number of colors in a vertex coloring of $\GG.$

It is well known that $\chi(\GG)\geq n/\alpha(\GG).$ (See for instance \cite[page~147]{Bollobas}).
Let $d$ be the maximal degree of a graph $\GG.$ From the definition, we have that $$\chi(\GG)\leq
d+1.$$ Brooks \cite{Brooks} classified all graphs for which the equality $\chi(\GG)=d+1$ holds. In
particular, if $\chi(\GG)=d+1,$ then $\GG$ contains $K_{d+1}$ or $d=2$ and $\GG$ contains an odd
cycle. Using this result, we obtain the following upper bound for the order of a graph in terms of
the independent number $\alpha(\GG)$ and the maximal degree $d$ of $\GG.$

\begin{lemma}\label{Brooks}
Let $\GG$ be a graph of order $n$ with maximal degree $d\geq 3.$ Suppose that $\GG$ is
$K_{d+1}$-free. Then $n\leq \alpha(\GG)d.$
 \end{lemma}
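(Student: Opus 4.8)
The plan is to combine the two chromatic-number inequalities recalled just before the statement. Recall that in any proper vertex coloring of $\GG$, each color class is an independent set and hence has at most $\alpha(\GG)$ vertices; using $\chi(\GG)$ colors to cover all $n$ vertices therefore gives $n \leq \chi(\GG)\,\alpha(\GG)$, which is exactly the inequality $\chi(\GG) \geq n/\alpha(\GG)$ quoted in the text. So it suffices to bound $\chi(\GG)$ from above by $d$ and then rearrange.

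The key step is to upgrade the trivial bound $\chi(\GG) \leq d+1$ to the strict bound $\chi(\GG) \leq d$ under the hypotheses. This is where Brooks' theorem enters. By Brooks' classification, if $\chi(\GG) = d+1$ then either $\GG$ contains $K_{d+1}$ or else $d = 2$ and $\GG$ contains an odd cycle. Both alternatives are excluded by our assumptions: the first contradicts the hypothesis that $\GG$ is $K_{d+1}$-free, and the second is impossible since $d \geq 3$. Hence $\chi(\GG) \neq d+1$, and combined with $\chi(\GG) \leq d+1$ this forces $\chi(\GG) \leq d$.

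Putting the two inequalities together yields
\[
\frac{n}{\alpha(\GG)} \;\leq\; \chi(\GG) \;\leq\; d,
\]
and multiplying through by $\alpha(\GG)$ gives $n \leq \alpha(\GG)\,d$, as required.

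There is no real obstacle here: the argument is a direct two-line application of Brooks' theorem together with the standard lower bound on the chromatic number. The only point requiring care is invoking Brooks' classification correctly to rule out the equality case $\chi(\GG) = d+1$; once the hypotheses $K_{d+1}$-free and $d \geq 3$ eliminate both of Brooks' exceptional configurations, the bound is immediate.
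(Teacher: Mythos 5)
Your proof is correct and follows essentially the same route as the paper: apply Brooks' theorem (with the $K_{d+1}$-free and $d\geq 3$ hypotheses ruling out the equality case) to get $\chi(\GG)\leq d$, then combine with the standard bound $\chi(\GG)\geq n/\alpha(\GG)$. The only difference is that you spell out the color-class justification of the latter inequality, which the paper simply cites.
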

 \begin{proof}
By Brooks' Theorem \cite{Brooks}, we deduce that $\chi(\GG)\leq d.$ Since $\chi(\GG)\geq
n/\alpha(\GG),$ we obtain that $n/\alpha(\GG)\leq d$ or equivalently $n\leq \alpha(\GG)d$ as
wanted.
\end{proof}
Notice that if $\GG$ is connected with maximal degree $d\geq 3$ which is not $K_{d+1},$ then the
order of $\GG$ is bounded above by $\alpha(\GG)d.$

Let $G$ be a group and let $\pi\subseteq \rho(G).$ For solvable groups, P\'{a}lfy \cite{Palfy}
showed that there is always an edge between two primes in $\pi$ whenever $|\pi|\geq 3.$ For
arbitrary groups, Moret\'{o} and Tiep \cite{Moreto-Tiep} proved that a similar conclusion holds
provided that $|\pi|\geq 4.$ We summarise these results in the following lemma.

\begin{lemma} \label{Conditions} Let $G$ be a group and let $\pi\subseteq \rho(G).$
\begin{enumerate}
\item[$(1)$] \emph{(P\'{a}lfy's Condition~\cite[Theorem]{Palfy}).}
If $G$ is solvable and $|\pi|\geq 3,$ then there exist two distinct primes $u,v$ in $\pi$ and
$\chi\in\Irr(G)$ such that $uv\mid \chi(1).$

\item[$(2)$]\emph{(Moret\'{o}-Tiep's Condition~\cite[Main~Theorem]{Moreto-Tiep}).}
If $|\pi|\geq 4,$ then there exists $\chi\in\Irr(G)$ such that $\chi(1)$ is divisible by two
distinct primes in $\pi.$
\end{enumerate}
\end{lemma}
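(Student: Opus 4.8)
The plan is to observe that both parts of this lemma are verbatim restatements of published theorems, so at the formal level the proof consists of invoking the cited results: part $(1)$ is the main theorem of P\'{a}lfy \cite{Palfy}, and part $(2)$ is the main theorem of Moret\'{o} and Tiep \cite{Moreto-Tiep}. I would therefore write the proof itself as a single sentence pointing to these references. Nevertheless, it is worth recording the shape of the arguments that stand behind them, since the gap between the hypotheses $|\pi|\geq 3$ and $|\pi|\geq 4$ is precisely what forces the two-layer structure of the present paper (solvable versus nonsolvable).

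For part $(1)$, the statement is equivalent to saying that the independence number of the prime graph of a solvable group is at most two, or equivalently that its complement is triangle-free. To prove it directly I would argue by contradiction from a counterexample of minimal order: suppose $u,v,w\in\pi$ are pairwise non-adjacent in $\Delta(G)$, so that no character degree of $G$ is divisible by any of the products $uv$, $uw$, or $vw$. The natural tools are Clifford theory together with the structure of the Fitting subgroup. One passes to the action of $G$ on its chief factors, analyzes the orbit sizes and stabilizers of the relevant characters under coprime action, and shows that avoiding all three products simultaneously over-constrains the degrees arising in a $\{u,v,w\}$-Hall subgroup. The crux is a careful bookkeeping of how three primes can divide $|G|$ without any pair meeting in a single degree; this delicate local analysis is exactly what is carried out in \cite{Palfy}, and it is the step I expect to be the main obstacle in any self-contained treatment.

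For part $(2)$, the weaker conclusion (one needs four primes rather than three) reflects that the solvable result genuinely fails for nonsolvable groups: a single nonabelian simple composition factor can already contribute three pairwise non-adjacent primes, so the complement of $\Delta(G)$ may contain a triangle. The strategy of \cite{Moreto-Tiep} is to reduce to the simple and almost simple groups and then to appeal to the Classification of Finite Simple Groups, verifying family by family that the character degree sets of the simple groups cannot exhibit four pairwise non-adjacent primes. The main obstacle here is not conceptual but the exhaustive case analysis over all simple groups, which is precisely why this direction relies on the Classification and cannot be recovered by the elementary solvable-group methods underlying part $(1)$. For the purposes of the present paper I would simply quote both theorems and proceed.
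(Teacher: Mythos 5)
Your proposal matches the paper exactly: Lemma~\ref{Conditions} is stated there as a summary of the cited theorems of P\'{a}lfy and of Moret\'{o}--Tiep, with no proof beyond the references, which is precisely what you do. Your supplementary sketches of the underlying arguments (and the observation that a simple group such as $\PSL_2(2^f)$ already yields three pairwise non-adjacent primes, explaining the gap between $|\pi|\geq 3$ and $|\pi|\geq 4$) are accurate but not required.
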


Translating these results into graph theoretic terminology, we obtain the following.

\begin{lemma}\label{Independent set}
Let $G$ be a group with prime graph $\Delta(G).$ Then $\alpha(\Delta(G))\leq 3.$ Moreover, if $G$
is solvable, then $\alpha(\Delta(G))\leq 2.$
\end{lemma}

Combining the previous two lemmas, we obtain an upper bound for the order of the prime graph
$\Delta(G)$ of a group $G.$

\begin{corollary}\label{Upper bound}
Let $G$ be a group with prime graph $\Delta(G).$ Suppose that the maximal degree of $\Delta(G)$ is
$d\geq 3$ and $\Delta(G)$ is $K_{d+1}$-free. Then $|\rho(G)|\leq 3d$  and if $G$ is solvable, then
$|\rho(G)|\leq 2d.$ In particular, if $\Delta(G)$ is connected $k$-regular for some $k\geq 3$ which
is not $K_{k+1},$ then $|\rho(G)|\leq 3k;$ and if $G$ is solvable, then $|\rho(G)|\leq 2k.$
\end{corollary}

We now classify regular graphs with small valency which might occur as $\Delta(G)$ for some group
$G$ using results we have collected so far. We first consider the case $\Delta(G)$ is disconnected.

\begin{lemma}\label{disconnected regular graphs}
Let $G$ be a group and let $k\geq 0$ be an integer. Suppose that $\Delta(G)$ is a disconnected
$k$-regular graph. Then $k=0.$
\end{lemma}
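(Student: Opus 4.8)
The plan is to use the known structural results on the connected components of prime graphs together with the upper bound on the independence number from Lemma~\ref{Independent set}. Recall the general fact, cited in the introduction (\cite[Theorems~6.4, 6.5]{Lewis08}), that $\Delta(G)$ has at most three connected components. The strategy is to show that a disconnected $k$-regular graph with $k \geq 1$ cannot have an independence number of at most $3$, contradicting Lemma~\ref{Independent set}.

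First I would suppose for contradiction that $\Delta(G)$ is disconnected and $k$-regular with $k \geq 1$. Since $\Delta(G)$ is $k$-regular with $k \geq 1$, every vertex has degree at least $1$, so no connected component is an isolated vertex; hence each component contains at least $k+1 \geq 2$ vertices. In fact, a connected $k$-regular graph with $k \geq 1$ must have at least $k+1$ vertices, so each component contributes at least two vertices. Being disconnected, $\Delta(G)$ has at least two components, but by the cited bound it has at most three. The key observation is that one may pick one vertex from each distinct connected component to form an independent set, since vertices in different components are never adjacent.

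The main case analysis then runs on the number of components. If $\Delta(G)$ has three components, then selecting one vertex from each yields an independent set of size $3$; but to exceed the bound $\alpha(\Delta(G)) \leq 3$ I would instead observe that each component, being $k$-regular with $k\geq 1$, has at least two vertices, and within a single component one can still find structure. The cleaner route is: if there are exactly two components, each is a connected $k$-regular graph with $k \geq 1$, hence each has at least two vertices. Choosing one vertex from the first component and a suitable independent pair requires care, so the sharpest argument uses that a $k$-regular graph with $k\geq1$ and at least two components forces a large independent set when combined with Lemma~\ref{Independent set}. Concretely, picking one vertex per component already gives an independent set whose size equals the number of components; combined with the fact that within any one nontrivial component there is at least one additional vertex nonadjacent to the chosen representatives of the other components, one pushes $\alpha(\Delta(G))$ strictly above the allowed threshold, yielding the contradiction that forces $k=0$.

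The hard part, and the step requiring the most care, will be squeezing out enough independent vertices to violate $\alpha(\Delta(G))\leq 3$ in the two-component case, since two representatives alone only give an independent set of size $2$, which is permissible. I would handle this by noting that each component has at least $k+1$ vertices, and for $k \geq 1$ one can locate two nonadjacent vertices inside at least one component (a connected $k$-regular graph is complete only when it has exactly $k+1$ vertices, and even then one can cross-combine with the other component's vertices); taking two independent vertices from one component together with one from the other yields an independent set of size $3$, and adjoining a further vertex from the second component yields size $4$, contradicting $\alpha(\Delta(G))\leq 3$. Thus no disconnected $k$-regular graph with $k\geq1$ can occur, forcing $k=0$ as claimed.
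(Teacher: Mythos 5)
There is a genuine gap, and it sits exactly where you flagged the ``hard part.'' The crux of your argument is the claim that, in the two-component case, at least one component contains two nonadjacent vertices, allowing you to assemble an independent set of size $4$. This fails precisely when every component is complete: two disjoint copies of $K_{k+1}$ form a disconnected $k$-regular graph whose independence number is $2$ (one vertex per component, and no two vertices \emph{within} a component are nonadjacent). For instance, two disjoint triangles give a disconnected $2$-regular graph with $\alpha=2$, which is compatible even with the stronger solvable bound $\alpha(\Delta(G))\leq 2$ of Lemma~\ref{Independent set}. Your parenthetical remark that a connected $k$-regular graph is complete only when it has exactly $k+1$ vertices concedes this case but does not dispose of it: ``cross-combining'' with the other component yields only one vertex per component, hence an independent set of size at most the number of components, which is at most $3$ and never contradicts $\alpha(\Delta(G))\leq 3$. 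So the contradiction you need simply does not materialize, and no repair within the independence-number framework is possible, since disjoint unions of equal complete graphs satisfy all the constraints that P\'{a}lfy's and Moret\'{o}--Tiep's conditions impose.

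The configurations your argument cannot exclude --- disjoint complete components of equal size --- are precisely the ones the paper's proof is built to kill, and it uses strictly stronger inputs. For solvable $G$, the paper invokes \cite[Corollary~4.2]{Lewis08} (exactly two components, each complete, of sizes $n_1\leq n_2$), so regularity forces $n_1-1=k=n_2-1$, i.e.\ $n_1=n_2$; then \cite[Theorem~4.3]{Lewis08} gives the inequality $n_2\geq 2^{n_1}-1$, which forces $n_1=n_2=1$ and hence $k=0$. For nonsolvable $G$, \cite[Theorem~6.4]{Lewis08} says that one connected component of $\Delta(G)$ is a single isolated vertex, so regularity immediately gives $k=0$. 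You would need to import at least one of these two structural facts (the exponential gap between component sizes in the solvable case, or the isolated-vertex component in the nonsolvable case) to complete the proof; the independence bound alone is too weak.
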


\begin{proof}
Assume first that $G$ is solvable. By \cite[Corollay~4.2]{Lewis08}, we know that $\Delta(G)$ has
exactly two connected components with vertex sets $\rho_1$ and $\rho_2,$ where $n_1=|\rho_1|\leq
n_2=|\rho_2|$ and that each connected component is complete. It follows that each vertex in
$\rho_1$ has degree $n_1-1$ and each vertex in $\rho_2$ has degree $n_2-1,$ respectively. Hence,
$n_1-1=k=n_2-1,$ so $n_1=n_2.$ Now \cite[Theorem~4.3]{Lewis08} yields that $n_2\geq 2^{n_1}-1,$
which forces $n_1=n_2=1$ and so $k=0.$

Assume now that $G$ is nonsolvable. By \cite[Theorem~6.4]{Lewis08}, $\Delta(G)$ has at most $3$
connected components and one of which is an isolated vertex. Since $\Delta(G)$ is $k$-regular, we
deduce that $k=0.$ The proof is now complete.
\end{proof}

The next result gives a classification of all $k$-regular graphs with $0\leq k\leq 2,$ which can
occur as $\Delta(G)$ for some group $G.$

\begin{proposition}\label{regular graphs of small valency} Let $G$ be a group. Suppose that the
prime graph $\Delta(G)$ is $k$-regular for some $k$ with $0\leq k\leq 2.$ Then the following hold.
\begin{enumerate}
  \item[$(1)$] If $k=0,$ then $\Delta(G)$ has at most $3$ vertices and each vertex of $\Delta(G)$ is isolated.
  \item[$(2)$] If $k=1,$ then $\Delta(G)$ is isomorphic to $K_2,$ a complete graph with $2$
  vertices. In particular, $G$ is solvable.
  \item [$(3)$] If $k=2,$ then $\Delta(G)$ is either a triangle or a square. Moreover, $G$ is
  solvable if $\Delta(G)$ is a square.
\end{enumerate}
\end{proposition}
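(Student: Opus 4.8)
The plan is to split into the three cases by the value of $k$ and reduce each to elementary graph theory together with the two inputs already available: the independence bound of Lemma~\ref{Independent set} and the connectivity dichotomy of Lemma~\ref{disconnected regular graphs}. For $k=0$, a $0$-regular graph has no edges, so every vertex is isolated and the entire vertex set $\rho(G)$ is an independent set; hence $|\rho(G)|=\alpha(\Delta(G))\leq 3$ by Lemma~\ref{Independent set}, which is exactly statement $(1)$.

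For $k=1$ and $k=2$ I would first note that Lemma~\ref{disconnected regular graphs} forces $\Delta(G)$ to be connected, since $k\neq 0$. A connected $1$-regular graph is a single edge, i.e. $K_2$, giving the graph in $(2)$; and a connected $2$-regular graph is a single cycle, say an $n$-cycle with $n\geq 3$. Using $\alpha=\lfloor n/2\rfloor$ for an $n$-cycle together with Lemma~\ref{Independent set} ($\alpha\leq 3$ in general, and $\alpha\leq 2$ when $G$ is solvable) immediately bounds $n\leq 7$, and $n\leq 5$ in the solvable case. This settles the purely combinatorial part; what remains is to cut the surviving cycles down to the triangle and the square and to prove the solvability assertions.

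The substantive step, and the main obstacle, is precisely this last reduction, which cannot be obtained from the handshake and independence arguments alone: for instance, the $5$-cycle has $\alpha=2$ and so survives even the solvable bound. Here I would invoke the classification of prime graphs from \cite{Lewis-White13, Hung}. The triangle-free prime graphs are completely described there, and among the cycles of length $\geq 4$ (all of which are triangle-free) only the square occurs, and it occurs only for solvable groups; this rules out the $5$-, $6$-, and $7$-cycles and yields the solvability of the square in $(3)$. The triangle $K_3$ does contain a triangle and is therefore not excluded, so it remains as the other possibility in $(3)$, with no solvability claim attached.

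Finally, for $(2)$ the same body of results shows that a group with $|\rho(G)|=2$ must be solvable (equivalently, every nonsolvable group satisfies $|\rho(G)|\geq 3$), which gives the solvability of $K_2$. Throughout, the regularity hypothesis does almost all of the combinatorial work by collapsing each case to a single highly structured graph, so that the only real content lies in matching these candidate graphs against the cited classifications of \cite{Lewis-White13, Hung}.
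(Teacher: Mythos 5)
Your proposal is correct and, in substance, follows the same route as the paper: Lemma~\ref{disconnected regular graphs} forces connectivity for $k\geq 1$, the connected $1$- and $2$-regular graphs are $K_2$ and the cycles, and \cite[Theorem~C]{Hung} together with \cite[Theorem~B]{Lewis-White13} eliminates all cycles of length greater than $3$ except the square and yields solvability in the square case. Two of your deviations are harmless: for $k=0$ you deduce $|\rho(G)|\leq 3$ from the independence bound $\alpha(\Delta(G))\leq 3$ of Lemma~\ref{Independent set}, where the paper instead quotes the fact that $\Delta(G)$ has at most three connected components \cite[Theorem~6.4]{Lewis08} --- both are valid, and yours is arguably more self-contained given the lemmas already in place; and your preliminary bound $n\leq 7$ on the cycle length via $\alpha(C_n)=\lfloor n/2\rfloor$ is correct but superfluous, since the appeal to \cite[Theorem~C]{Hung} already disposes of every $n>3$ at once, exactly as the paper does.

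The one point you should repair is the solvability claim in $(2)$: you attribute the implication ``$|\rho(G)|=2$ implies $G$ solvable'' to \cite{Lewis-White13, Hung}, but neither of those papers proves that statement. It is a classical consequence of Ito--Michler and Burnside, and this is precisely how the paper argues. Concretely, if $\Delta(G)\cong K_2$ with $\rho(G)=\{p,q\},$ then by Ito--Michler \cite[Theorem~5.5]{Michler} every prime $r\notin\{p,q\}$ dividing $|G|$ gives a normal abelian Sylow $r$-subgroup of $G;$ the product of these is a normal abelian Hall $\{p,q\}'$-subgroup whose quotient is a $\{p,q\}$-group, solvable by Burnside's $p^aq^b$ theorem \cite[Theorem~3.10]{Isaacs}, so $G$ itself is solvable. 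Since the fact you invoked is true and standard, this is a citation-level flaw rather than a logical gap; with the substitution above your argument is complete and matches the paper's proof.
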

\begin{proof}
If $k=0,$ then the result is clear since $\Delta(G)$ has at most three connected components. Assume
now that $k\geq 1.$ By Lemma~\ref{disconnected regular graphs}, $\Delta(G)$ is a connected
$k$-regular graph. If $k=1,$ then $\Delta(G)$ is a line with two vertices and by using
Ito-Michler's Theorem \cite[Theorem~5.5]{Michler} and Burnside's $p^aq^b$ Theorem
\cite[Theorem~3.10]{Isaacs}, we deduce that $G$ is solvable. Finally, assume that $k=2.$ It follows
that $\Delta(G)$ is a cycle of length $n=|\rho(G)|\geq 3.$ If $n=3,$ then $\Delta(G)$ is a triangle
and $G$ could be solvable or nonsolvable. Assume now that $n>3.$ It follows from
\cite[Theorem~C]{Hung} that $n=4$ and so $\Delta(G)$ is a square. Now
\cite[Theorem~B]{Lewis-White13} yields that $G$ is solvable.
\end{proof}

In the last result of this section, we eliminate all but four cubic graphs of order at least $6$
which might occur as the prime graph of some group. These graphs will be ruled out in the next two
sections.

\begin{proposition}\label{classification of 3-regular graphs}
If the prime graph $\Delta(G)$ of a group $G$  is a connected $3$-regular graph with $|\rho(G)|\geq
6,$ then $\Delta(G)$ is isomorphic to one of the graphs in Figures~\ref{Fig1}-\ref{Fig4}.
\end{proposition}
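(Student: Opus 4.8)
The plan is to combine the order bound of Corollary~\ref{Upper bound} with the parity constraint of Lemma~\ref{hand shaking} in order to reduce to a finite list of graphs, and then to filter that list with the independence bound of Lemma~\ref{Independent set}. First I would observe that since $|\rho(G)|\geq 6$ the graph $\Delta(G)$ is not $K_4$, and being connected and $3$-regular it is therefore $K_4$-free; hence Corollary~\ref{Upper bound} applies with $d=k=3$ and yields $|\rho(G)|\leq 9$. On the other hand every vertex of a $3$-regular graph is an odd vertex, so by Lemma~\ref{hand shaking} the order of $\Delta(G)$ is even. Thus $|\rho(G)|\in\{6,8\}$. Finally Lemma~\ref{Independent set} gives $\alpha(\Delta(G))\leq 3$, which will be the decisive separating condition.

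The second step is a finite isomorphism check. Up to isomorphism there are only two connected cubic graphs on six vertices, namely $K_{3,3}$ and the triangular prism, and five on eight vertices, so in principle one simply lists them and keeps those satisfying $\alpha\leq 3$. The cleanest way to run the eight-vertex filter is the remark that a cubic graph $\GG$ on eight vertices satisfies $\alpha(\GG)=4$ if and only if it is bipartite: an independent set of size $4$ absorbs all $12$ edges between itself and the complementary four vertices, forcing that complementary set to be independent as well. Since the unique bipartite cubic graph on eight vertices is the cube $Q_3$, the condition $\alpha\leq 3$ eliminates exactly $Q_3$ among the eight-vertex graphs, and a short parallel inspection settles the six-vertex pair; one then reads off which survivors appear in Figures~\ref{Fig1}--\ref{Fig4}.

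I expect the main obstacle to be the final pruning rather than these two reductions. The independence bound $\alpha\leq 3$ by itself is not quite sharp enough: of the two six-vertex graphs, $K_{3,3}$ still has $\alpha=3$ and the prism has $\alpha=2$, while each of the four non-bipartite eight-vertex cubic graphs has $\alpha=3$, so the bare counting argument leaves more candidates than the four depicted. Cutting the list down to exactly the graphs of Figures~\ref{Fig1}--\ref{Fig4} is therefore the delicate part, and this is where I would invoke the finer structural information available for prime graphs --- for instance the connected-component and diameter restrictions recalled in the introduction, together with the small-order classifications of \cite{Hung} and \cite{Lewis-White13} --- to discard the borderline cases (such as $K_{3,3}$ and the prism) that pass the independence test but cannot actually occur as some $\Delta(G)$. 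Care must also be taken to justify completeness of the underlying enumeration of connected cubic graphs on six and eight vertices, since the whole argument rests on that list being exhaustive.
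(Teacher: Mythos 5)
There is a genuine gap, and you have in fact located it yourself: the independence bound $\alpha(\Delta(G))\leq 3$ of Lemma~\ref{Independent set} is not the decisive filter, and the substitute tools you propose do not close the argument. The missing ingredient is \cite[Theorem~A]{Hung}, which guarantees that $\Delta(G)$ \emph{contains a triangle} once $|\rho(G)|\geq 6$; this is exactly what the paper invokes immediately after the same reduction to $|\rho(G)|\in\{6,8\}$ that you carry out (your first paragraph coincides with the paper's opening via Lemma~\ref{hand shaking} and Corollary~\ref{Upper bound}). Triangle existence is what eliminates the candidates that survive your independence test: $K_{3,3}$ and the Wagner graph (the M\"{o}bius ladder on eight vertices, which is non-bipartite with $\alpha=3$) are triangle-free, as is the cube $Q_3$, so among the $2+5$ connected cubic graphs of orders six and eight exactly the four containing a triangle remain, and these are precisely the graphs of Figures~\ref{Fig1}--\ref{Fig4}. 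The tools you propose instead are powerless here: all seven candidate graphs are connected of diameter at most three, so the component and diameter restrictions recalled in the introduction exclude nothing, and the results of \cite{Lewis-White13} concern four-vertex graphs.

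Worse, your plan to discard the triangular prism is an outright error: the prism \emph{is} the graph of Fig.~\ref{Fig1}, so it must be retained at this stage of the paper; it is ruled out as a prime graph only later, by the substantial group-theoretic arguments of Lemma~\ref{cubical graph with 6 vertices} and Section~\ref{sec4}, not by any quick structural restriction, and pruning it inside this proposition would leave you asserting a stronger statement with no proof. Finally, note a methodological difference: the paper never cites a census of cubic graphs --- starting from a triangle it reconstructs the possible graphs by hand, using Moret\'{o}-Tiep's Condition (Lemma~\ref{Conditions}$(2)$) to force edges and degree counting to close each case --- whereas your route stands or falls with the completeness of the enumeration ``two connected cubic graphs on six vertices, five on eight,'' which you correctly flag but would have to prove or source. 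With that enumeration justified and with \cite[Theorem~A]{Hung} substituted for your vague appeal to ``finer structural information,'' your approach would become a correct, and arguably shorter, alternative proof; as written, it does not reach the conclusion.
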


\begin{figure}[ht]
\begin{center}

\[\begin{tikzpicture}
    \vertex[fill] (p1) at (0,1)[label=left:$p_1$] {};
    \vertex[fill] (p3) at (1,0)  [label=below:$p_3$]  {};
    \vertex[fill] (p2) at (1,2)  [label=above:$p_2$]  {};
    \vertex[fill] (p5) at (3,2)  [label=above:$p_5$]  {};
    \vertex[fill] (p6) at (3,0)  [label=below:$p_6$]  {};
    \vertex[fill] (p4) at (4,1)  [label=right:$p_4$]  {};

    \path
        (p1) edge (p2)
        (p1) edge (p3)
        (p1) edge (p4)
        (p2) edge (p3)
        (p2) edge (p5)
        (p4) edge (p5)
        (p4) edge (p6)
        (p5) edge (p6)
        (p3) edge (p6)

        ;

\end{tikzpicture}\]
\end{center}
\caption{A cubic graph of order six} \label{Fig1}
\end{figure}
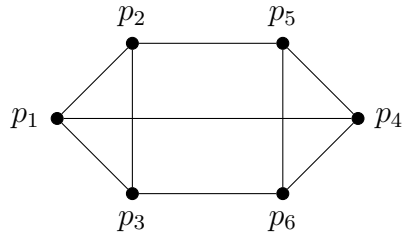

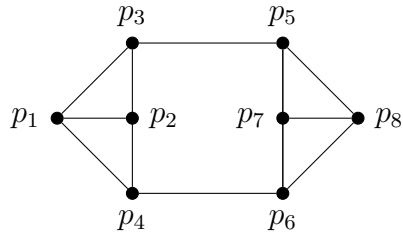
\begin{figure}[ht]
\begin{center}

\[\begin{tikzpicture}
    \vertex[fill] (p1) at (0,1)[label=left:$p_1$] {};
    \vertex[fill] (p2) at (1,1)  [label=right:$p_2$]  {};
    \vertex[fill] (p3) at (1,2)  [label=above:$p_3$]  {};
    \vertex[fill] (p4) at (1,0)  [label=below:$p_4$]  {};
    \vertex[fill] (p5) at (3,2)  [label=above:$p_5$]  {};
    \vertex[fill] (p6) at (3,0)  [label=below:$p_6$]  {};
    \vertex[fill] (p7) at (3,1)  [label=left:$p_7$]  {};
    \vertex[fill] (p8) at (4,1)  [label=right:$p_8$]  {};

    \path
        (p1) edge (p3)
        (p1) edge (p4)
        (p1) edge (p2)
        (p3) edge (p4)
        (p6) edge (p5)
        (p8) edge (p5)
        (p8) edge (p6)
        (p5) edge (p6)
        (p8) edge (p7)
        (p3) edge (p5)
        (p4) edge (p6)

        ;

\end{tikzpicture}\]
\end{center}
\caption{A cubic graph of order eight with four triangles} \label{Fig2}
\end{figure}

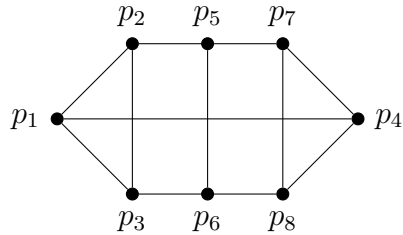
\begin{figure}[ht]
\begin{center}

\[\begin{tikzpicture}
    \vertex[fill] (p1) at (0,1)[label=left:$p_1$] {};
    \vertex[fill] (p2) at (1,2)  [label=above:$p_2$]  {};
    \vertex[fill] (p3) at (1,0)  [label=below:$p_3$]  {};
    \vertex[fill] (p4) at (4,1)  [label=right:$p_4$]  {};
    \vertex[fill] (p5) at (2,2)  [label=above:$p_5$]  {};
    \vertex[fill] (p6) at (2,0)  [label=below:$p_6$]  {};
    \vertex[fill] (p7) at (3,2)  [label=above:$p_7$]  {};
    \vertex[fill] (p8) at (3,0)  [label=below:$p_8$]  {};

    \path
        (p1) edge (p4)
        (p2) edge (p7)
        (p3) edge (p8)
        (p2) edge (p3)
        (p6) edge (p5)
        (p8) edge (p7)
        (p1) edge (p2)
        (p1) edge (p3)
        (p4) edge (p7)
        (p4) edge (p8)

        ;

\end{tikzpicture}\]
\end{center}
\caption{A cubic graph of order eight with two triangles} \label{Fig3}
\end{figure}

\begin{figure}[ht]
\begin{center}

\[\begin{tikzpicture}
    \vertex[fill] (p1) at (0,1)[label=left:$p_1$] {};
    \vertex[fill] (p2) at (1,2)  [label=above:$p_2$]  {};
    \vertex[fill] (p3) at (1,0)  [label=below:$p_3$]  {};
    \vertex[fill] (p4) at (4,1)  [label=right:$p_4$]  {};
    \vertex[fill] (p5) at (2,2)  [label=above:$p_5$]  {};
    \vertex[fill] (p6) at (2,0)  [label=below:$p_6$]  {};
    \vertex[fill] (p7) at (3,2)  [label=above:$p_7$]  {};
    \vertex[fill] (p8) at (3,0)  [label=below:$p_8$]  {};

    \path
        (p1) edge (p4)
        (p2) edge (p7)
        (p3) edge (p8)
        (p2) edge (p3)
        (p6) edge (p7)
        (p8) edge (p5)
        (p1) edge (p2)
        (p1) edge (p3)
        (p4) edge (p7)
        (p4) edge (p8)
        ;

\end{tikzpicture}\]
\end{center}
\caption{A cubic graph of order eight with one triangle} \label{Fig4}
\end{figure}
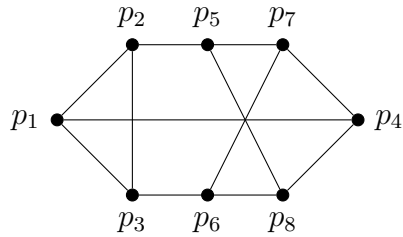

\begin{proof}
Suppose that $\Delta(G)$ is a connected $3$-regular graph with $|\rho(G)|\geq 6$ for some group
$G.$ Since $\Delta(G)$ is $3$-regular, every vertex of $\Delta(G)$ is an odd vertex and thus by
Lemma~\ref{hand shaking}, $|\rho(G)|$ must be even. By Corollary~\ref{Upper bound}, $|\rho(G)|\leq
9$ since it is connected but it is not isomorphic to $K_4.$ Therefore, $|\rho(G)|=6$ or $8.$
Writing $\rho(G)=\{p_i\}_{i=1}^n$ with $n=|\rho(G)|.$ By \cite[Theorem~A]{Hung}, we know that
$\Delta(G)$ always contains a triangle with vertex set, say $\{p_1,p_2,p_3\}.$ We consider the
following cases.

\medskip {\bf Case 1.} $\Delta(G)$ has a vertex which is adjacent to two distinct vertices in
$\{p_i\}_{i=1}^3.$

Without loss of generality, assume that $p_4\in\rho(G)$ is adjacent to two distinct vertices in
$\{p_i\}_{i=1}^3,$ say $p_1$ and $p_2.$ Since $p_1$ and $p_2$ are joined to each other and to both
$p_3$ and $p_4,$ they are not adjacent to any other vertices. As both $p_3$ and $p_4$ have degree
$3$ in $\Delta(G),$ one of the following cases holds.

\smallskip
$(\textrm{i})$ Both $p_3$ and $p_4$ are adjacent to the same vertex, say $p_5.$

Assume that $|\rho(G)|=6.$ As $\Delta(G)$ is connected, $p_6$ must be adjacent to $p_5$ but it
cannot be adjacent to any other vertices since otherwise $\Delta(G)$ would have a vertex of degree
at least four, which is impossible. However, this case cannot happen since we cannot add more edges
to this graph to obtain a cubic graph of order six.

Now assume that $|\rho(G)|=8.$ Let $\tau=\{p_6,p_7,p_8\}.$ As both $p_3$ and $p_4$ are adjacent to
$p_1,p_2$ and $p_5,$ they are not joined to each other and to any vertices in $\tau.$ So, for any
$u\neq v\in\tau,$ by applying Moret\'{o}-Tiep's Condition for $\{u,v,p_3,p_4\},$ we see that $u$
and $v$ must be joined to each other. In particular, $\tau$ is a vertex set of a triangle in
$\Delta(G).$  As $\Delta(G)$ is connected, $p_5$ must be adjacent to some vertex in $\tau,$ say
$p_6.$ But then we cannot add more edges to this graph to obtain a cubic graph since $p_7$ and
$p_8$ cannot be joined to any other vertices. Hence, this case cannot happen.

\smallskip
$(\textrm{ii})$ $p_3$ and $p_4$ are adjacent to two distinct vertices, say $p_5$ and $p_6,$
respectively.

If $|\rho(G)|=6,$ then we cannot add more edges to this graph to obtain a cubic graph. Hence, this
case cannot occur.

Therefore, we can assume that $|\rho(G)|=8.$  Let $\tau=\{p_7,p_8\}.$ Notice that $p_3$ and $p_4$
are not adjacent in $\Delta(G).$ By Moret\'{o}-Tiep's Condition for $\{p_3,p_4,p_7,p_8\},$ $p_7$
and $p_8$ are joined to each other. Assume that $p_5$ and $p_6$ are adjacent.  Since $\Delta(G)$ is
connected and each $p_i,1\leq i\leq 4,$ cannot be adjacent to any other vertices, $p_5$ must be
adjacent to either $p_7$ or $p_8,$ say $p_7.$ As the degree of $p_6$ is three, it must be adjacent
to either $p_7$ or $p_8.$ However, both cases are impossible as we cannot add more edges to this
graph to obtain a cubic graph. Thus, we can assume that $p_5$ and $p_6$ are not joined to each
other. Hence, each of them must be adjacent to both $p_7$ and $p_8$ and thus $\Delta(G)$ is
isomorphic to the graph in Fig.~\ref{Fig2}.

\medskip {\bf Case 2.} No vertex in $\rho(G)$ is adjacent to two distinct vertices in
$\{p_i\}_{i=1}^3.$

As each $p_i,i=1,2,3,$ has degree three in $\Delta(G),$ there exist $\{p_j\}_{j=4}^6\subseteq
\rho(G)-\{p_i\}_{i=1}^3$ such that each $p_k$ is adjacent to $p_{k+3}$ for $k=1,2,3.$

Assume first that $|\rho(G)|=6.$ Then $\rho(G)=\{p_i\}_{i=1}^6.$ Clearly, $\Delta(G)$ is a
$3$-regular graph   if and only if  $\{p_i\}_{i=4}^6$ forms a triangle. Hence, $\Delta(G)$ is the
graph in Fig.~\ref{Fig1}. Assume next that $|\rho(G)|=8.$ Let $\tau=\{p_7,p_8\}.$ Then the
following cases hold.

\smallskip
$(\textrm{i})$ Assume that some vertex $p_j$ with $4\leq j\leq 6$ is adjacent to the remaining
vertices in $\{p_i\}_{i=4}^6.$ Without loss of generality, assume that $p_6$ is adjacent to $p_4$
and $p_5.$ Since $\Delta(G)$ is connected and $|\rho(G)|=8,$ $p_4$ and $p_5$ are not adjacent in
$\Delta(G).$ By Moret\'{o}-Tiep's Condition for $\{p_1,p_6,p_7,p_8\},$  $p_7$ and $p_8$ are
adjacent in $\Delta(G).$ As $p_4$ has degree three, it must be adjacent to $p_7$ or $p_8,$ say
$p_7.$ Similarly, $p_5$ is adjacent to either $p_7$ or $p_8.$ However, both cases are impossible as
we cannot add more edges to this graph to obtain a cubic graph.

\smallskip
$(\textrm{ii})$ Assume next that there is exactly one edge among vertices $\{p_i\}_{i=4}^6,$ say
$\{p_5,p_6\}.$ Then $p_4$ is not adjacent to $p_5,p_6.$ Since $p_4$ has degree $3$ in $\Delta(G)$
and it is not adjacent to any vertices in $\{p_i\}_{i=2}^6,$ it is adjacent to both $p_7$ and
$p_8.$ Similarly, as $p_5$ has degree $3$ in $\Delta(G),$ it is adjacent to either $p_7$ or $p_8,$
say $p_7.$ It follows that $p_6$ is adjacent to either $p_7$ or $p_8.$ If $p_6$ is adjacent to
$p_7,$ then $p_8$ is not adjacent to any other vertices, except for $p_4,$ which is impossible.
Thus $p_6$ is adjacent to $p_8.$ By joining $p_7$ and $p_8,$ we obtain the graph in
Fig.~\ref{Fig3}.

\smallskip
$(\textrm{iii})$ Assume that there is no edge among vertices $p_j,4\leq j\leq 6.$ For each $i\in
\{1,2,3\},$ $p_i$ is joined to $p_{i+3}$ and to the remaining vertices in $\{p_\ell\}_{\ell=1}^3,$
so it is not adjacent to any other vertices. Hence, each vertex $p_j, 4\leq j\leq 6,$ is adjacent
to both vertices $p_7$ and $p_8.$ Thus, we obtain the graph in Fig.~\ref{Fig4}. The proof is now
complete.
\end{proof}

\section{Prime graphs of solvable groups}\label{sec3}
In this section, we prove Theorem~A for the solvable groups. We first eliminate the graph in
Fig.~\ref{Fig1} from being the prime graph of some solvable group.

\begin{lemma}\label{cubical graph with 6 vertices} Suppose that the prime graph
$\Delta(G)$ of a group $G$ is isomorphic to the graph in Fig.~\ref{Fig1}. Then $G'=G''.$ In
particular, $G$ is nonsolvable.
\end{lemma}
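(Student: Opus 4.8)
The graph in Figure~\ref{Fig1} is the triangular prism: its six vertices split into two triangles $T_1=\{p_1,p_2,p_3\}$ and $T_2=\{p_4,p_5,p_6\}$, joined by the perfect matching $p_1p_4,\ p_2p_5,\ p_3p_6$. The first thing I would record is that the conclusion $G'=G''$ is equivalent to the assertion that $G/G''$ is abelian, and that it automatically forces nonsolvability: if $G$ were solvable then $G'$ would be a perfect solvable group, hence trivial, so $G$ would be abelian with $\rho(G)=\emptyset$, contradicting $|\rho(G)|=6$. Thus the whole content of the lemma is the single equality $G'=G''$, and I would prove it by contradiction, assuming $G''<G'$.

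Under this assumption, set $M:=G/G''$. Then $M$ is metabelian (hence solvable), with nontrivial abelian normal derived subgroup $M'=G'/G''$. Since the irreducible characters of a quotient inflate to irreducible characters of $G$ of the same degree, we have $\cd(M)\subseteq\cd(G)$; consequently $\rho(M)\subseteq\rho(G)$ and $\Delta(M)$ is precisely the induced subgraph of the prism on the vertex set $\rho(M)$. The plan is to derive a contradiction from the strong constraints that a metabelian group imposes on this subgraph. The available inputs are: It\^{o}'s theorem (see \cite{Isaacs}), which forces every degree in $\cd(M)$ to divide the index $[M:M']$ of the abelian normal subgroup $M'$; P\'{a}lfy's condition, giving $\alpha(\Delta(M))\leq 2$ via Lemma~\ref{Independent set}; and the disconnected solvable structure from \cite[Corollary~4.2, Theorem~4.3]{Lewis08}, which already rules out certain small solvable prime graphs (for instance two disjoint edges, since $2\geq 2^2-1$ fails). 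Tracking which primes survive into $\rho(M)$ and which of the prism's edges they must retain, I would show that the metabelian graph $\Delta(M)$ cannot be realized, which contradicts $G''<G'$ and yields $G'=G''$.

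The main obstacle is exactly this last group-theoretic step, not the combinatorics. I want to stress that the purely combinatorial conditions collected so far do \emph{not} exclude the prism: its complement is the $6$-cycle $C_6$, which is triangle-free, so $\alpha(\Delta(G))=2$ and P\'{a}lfy's condition in Lemma~\ref{Conditions}(1) is satisfied, while Moret\'{o}--Tiep's condition is likewise compatible with a $3$-regular graph on six vertices of independence number two. Since the prism itself contains no induced pair of disjoint edges, one cannot simply quote the disconnected-solvable obstruction for the full graph; the forbidden configuration must be extracted from a \emph{proper} induced subgraph $\Delta(M)$ after controlling how the edges of $\Delta(M)$ are distributed relative to the abelian subgroup $M'$. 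Marshalling the correct structural theorem for solvable (metabelian) character degree graphs to force such a configuration is where the real difficulty lies, and it is the part of the argument I would expect to require the most care.
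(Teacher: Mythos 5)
Your setup is fine (the reduction to proving $G'=G''$, and the observation that solvability then forces $G$ abelian, contradicting $|\rho(G)|=6$, is exactly the paper's closing remark), but the core of your argument is missing, and the route you sketch for it cannot work as stated. You pass to $M=G/G''$ and plan to show ``the metabelian graph $\Delta(M)$ cannot be realized.'' But $M$ \emph{is} a metabelian group and $\Delta(M)$ \emph{is} its prime graph, so no structural theorem about metabelian (or solvable) degree graphs can make $\Delta(M)$ unrealizable; it is realized by construction. Nor is there any downward transfer available: since $\cd(M)\subseteq\cd(G)$, edges only propagate from $\Delta(M)$ up into $\Delta(G)$, so $\Delta(M)$ is merely a \emph{subgraph} of the induced subgraph of the prism on $\rho(M)$ (your claim that it equals the induced subgraph is false in general), and $\rho(M)$ could be a single prime, in which case $\Delta(M)$ carries no usable information. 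The tools you list (It\^{o}'s theorem, P\'{a}lfy via Lemma~\ref{Conditions}, the disconnected-solvable bounds from \cite{Lewis08}) all constrain $\Delta(M)$ in isolation and therefore cannot produce the contradiction. You candidly flag this step as the hard part, but it is the entire content of the lemma.

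The missing idea runs in the opposite direction: one must use the nonabelian solvable quotient to manufacture \emph{new} degrees in $\cd(G)$ that violate the prism structure of $\Delta(G)$ itself. The paper chooses $N\unlhd G$ with $G/N$ a \emph{minimal} nonabelian solvable group, so that by \cite[Lemma~12.3]{Isaacs} either $G/N$ is a nonabelian $r$-group or a Frobenius group with elementary abelian kernel $F/N$ and $f=|G:F|\in\cd(G)$, $\gcd(r,f)=1$. In the first case, any $\chi\in\Irr(G)$ with $r\nmid\chi(1)$ restricts irreducibly to $N$, so Gallagher's theorem \cite[Corollary~6.17]{Isaacs} applied with a character $\tau\in\Irr(G/N)$ of degree $r^a$ gives $\chi\tau\in\Irr(G)$; hence $r$ is adjacent to all five other vertices, contradicting $3$-regularity. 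In the second case, \cite[Theorem~12.4]{Isaacs} says $f\psi(1)\in\cd(G)$ for every $\psi\in\Irr(F)$ with $r\nmid\psi(1)$; since the prism has exactly two triangles and the matching edges $\{p_j,p_{j+3}\}$ lie in none, one picks $j$ with $r\notin\{p_j,p_{j+3}\}$ and $s\in\pi(f)\setminus\{p_j,p_{j+3}\}$, takes $\chi\in\Irr(G)$ with $\pi(\chi(1))=\{p_j,p_{j+3}\}$, and shows $f\theta(1)\in\cd(G)$ (for $\theta$ under $\chi$) is divisible by $s,p_j,p_{j+3}$, creating a forbidden triangle. This upward interaction between quotient characters and $\cd(G)$ --- Gallagher multiplication and relative Frobenius degrees --- is precisely what your proposal lacks, and without it the contradiction cannot be reached.
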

\begin{proof} By way of contradiction, assume that $G'\neq G''.$ Then $G/G''$ is a nonabelian
solvable group. Let $N$ be a maximal normal subgroup of $G$ such that $G/N$ is a minimal nonabelian
solvable group.  By \cite[Lemma~12.3]{Isaacs}, the following cases hold.

\medskip
{\bf Case 1.} $G/N$ is a nonabelian $r$-group for some prime $r.$

In this case, $G/N$ has an irreducible character $\tau\in\Irr(G/N)$ of degree $r^a$ for some prime
$r$ and some integer $a\geq 1.$ We now claim that $r$ is adjacent to every prime in
$\rho(G)-\{r\}.$ Indeed, for every prime $p\in\rho(G)-\{r\},$ there exists $\chi\in\Irr(G)$ with
$p\mid \chi(1).$ If $r\mid \chi(1),$ then $r$ and $p$ are joined to each other and we are done. So,
we can assume that $r\nmid \chi(1).$ It follows that $\gcd(\chi(1),|G:N|)=1,$ hence
$\chi_N\in\Irr(G).$ By Gallagher's Theorem \cite[Corollary~6.17]{Isaacs}, $\chi\tau\in\Irr(G),$ so
$pr\mid \chi(1)\tau(1),$ therefore, $p$ and $r$ are adjacent in $\Delta(G).$ Thus, $r$ is adjacent
to every prime in $\rho(G)-\{r\}.$ Since $|\rho(G)|=6,$ $r$ has degree five in $\Delta(G),$ which
is a contradiction.

\medskip
{\bf Case 2.} $G/N$ is a Frobenius group with Frobenius kernel $F/N,$ an elementary abelian
$r$-group for some prime $r,$ and $f=|G:F|\in\cd(G)$ with $\gcd(r,f)=1.$

By \cite[Theorem~12.4]{Isaacs}, we know that for every $\psi\in\Irr(F),$ either $f\psi(1)\in\cd(G)$
or $r\mid \psi(1).$ Since $\Delta(G)$ has no complete subgraph isomorphic to $K_4,$ we deduce that
$|\pi(\chi(1))|\leq 3$ for all $\chi\in\Irr(G).$ Notice that $\Delta(G)$ has exactly two triangles
with vertex sets $\{p_1,p_2,p_3\}$ and $\{p_4,p_5,p_6\}.$ The remaining edges of $\Delta(G)$ are
$\{p_i,p_{i+3}\}$ for $1\leq i\leq 3.$ As $f\in\cd(G),$ we have that $|\pi(f)|\leq 3.$ Hence,
$|\pi(f)\cup\{r\}|\leq 4,$ so there exists $j\in\{1,2,3\}$ such that $r\not\in\{p_j,p_{j+3}\}$ and
$\pi(f)\not\subseteq \{p_j,p_{j+3}\}.$ Thus, we can find $s\in\pi(f)$ with $s\not\in
\{p_j,p_{j+3}\}.$ Let $\chi\in\Irr(G)$ such that $p_jp_{j+3}\mid \chi(1).$ As $\Delta(G)$ contains
only two triangles with vertex sets $\{p_1,p_2,p_3\}$ and $\{p_4,p_5,p_6\},$ respectively, we
deduce that $\pi(\chi(1))=\{p_j,p_{j+3}\}.$ Let $\theta\in\Irr(F)$ be an irreducible constituent of
$\chi_F.$ Since $\theta(1)\mid \chi(1),$ we deduce that $r\nmid \theta(1),$ hence
$f\theta(1)\in\cd(G).$ Writing $\chi(1)=k\theta(1).$ Then $k\mid \gcd(\chi(1),f),$ so $s\nmid k,$
hence $s\mid f/k.$ We now have that $f\theta(1)=f\chi(1)/k=\chi(1)(f/k)$ is divisible by $s,p_j$
and $p_{j+3},$ so the subgraph on $\{s,p_j,p_{j+3}\}$ of $\Delta(G)$ is a triangle, a
contradiction. Therefore, $G'=G''$ as wanted.

Finally, if $G$ is solvable, then $G'=G''=1$ so  $G$ is abelian, which is impossible since
$|\rho(G)|=6.$ The proof is now complete.
\end{proof}

We now prove the main result of this section.
\begin{theorem}\label{3-regular solvable graph} Let $G$ be a solvable group. If $\Delta(G)$ is a
cubic graph, then $\Delta(G)$ is isomorphic to a complete graph  of order four.
\end{theorem}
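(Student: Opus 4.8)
The plan is to use the classification already established in Proposition~\ref{regular graphs of small valency} and Proposition~\ref{classification of 3-regular graphs} to reduce the problem to a finite, explicit list of candidate graphs, and then to eliminate every candidate except $K_4$ under the solvability hypothesis. First I would observe that by Lemma~\ref{disconnected regular graphs} a cubic (hence $3$-regular) graph $\Delta(G)$ must be connected, and by the remark following Corollary~\ref{Upper bound} its order is at most $3\cdot 3=9$; since every vertex is odd, Lemma~\ref{hand shaking} forces $|\rho(G)|$ to be even, so $|\rho(G)|\in\{4,6,8\}$. If $|\rho(G)|=4$ then a connected cubic graph on four vertices is exactly $K_4$, which is the desired conclusion. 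Thus the work is to rule out $|\rho(G)|=6$ and $|\rho(G)|=8$ for solvable $G$.

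For $|\rho(G)|\geq 6$, Proposition~\ref{classification of 3-regular graphs} tells me that $\Delta(G)$ is isomorphic to one of the four graphs in Figures~\ref{Fig1}--\ref{Fig4}. The graph in Fig.~\ref{Fig1} (the unique cubic graph of order six arising here) is already eliminated for solvable groups by Lemma~\ref{cubical graph with 6 vertices}, which shows that such a graph forces $G'=G''$ and hence $G$ nonsolvable. So it remains to eliminate the three graphs of order eight in Figures~\ref{Fig2}--\ref{Fig4}. The natural tool is Lemma~\ref{Independent set}: for solvable $G$ we have $\alpha(\Delta(G))\leq 2$, equivalently, any three vertices of $\Delta(G)$ must contain an adjacent pair (P\'alfy's Condition). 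So my strategy is to exhibit, in each of the three order-eight graphs, an independent set of size three — three pairwise non-adjacent vertices — which contradicts $\alpha(\Delta(G))\leq 2$ and therefore rules the graph out for solvable groups.

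The main step is thus purely combinatorial: inspecting each of the graphs in Figures~\ref{Fig2}, \ref{Fig3}, and \ref{Fig4} and locating three mutually non-adjacent vertices. In Fig.~\ref{Fig2}, for instance, I would check the vertices $\{p_2,p_5,p_7\}$ or a similar triple lying in different triangles; in Figs.~\ref{Fig3} and \ref{Fig4}, which contain long paths with few triangles, an independent triple is even more readily available (each of these graphs has girth and structure forcing large independent sets). In each case I verify from the edge lists that none of the three chosen primes is joined to either of the others, producing three primes no two of which have a common character-degree multiple, contradicting P\'alfy's Condition for the solvable group $G$. Since all cases $|\rho(G)|=6,8$ are thereby excluded, the only surviving possibility for solvable $G$ is $|\rho(G)|=4$ with $\Delta(G)\cong K_4$.

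The only genuinely delicate point is making sure the independent-triple argument is airtight for every one of the three order-eight graphs: I must confirm that each such graph really does admit an independent set of size three (so that $\alpha\geq 3>2$), rather than merely suspecting it. Given that Lemma~\ref{Independent set} caps $\alpha(\Delta(G))$ at $2$ for solvable groups while a cubic graph on eight vertices has independence number comfortably exceeding $2$, I expect no obstruction here; the bound $\alpha\geq n/\chi\geq n/d = 8/3 > 2$ already guarantees $\alpha\geq 3$ abstractly, so in principle one does not even need to exhibit the triple explicitly — the inequality $\alpha(\Delta(G))\geq |\rho(G)|/d$ together with $\alpha\leq 2$ and $|\rho(G)|=8$, $d=3$ gives $8/3\leq 2$, a contradiction directly. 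I would present this clean inequality argument as the primary elimination of the order-eight cases, falling back on explicit independent triples only if a referee wants them spelled out.
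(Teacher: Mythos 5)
Your proposal is correct and takes essentially the same route as the paper: the same chain of Lemma~\ref{disconnected regular graphs}, Lemma~\ref{hand shaking}, the Brooks-type bound, Proposition~\ref{classification of 3-regular graphs}, and Lemma~\ref{cubical graph with 6 vertices} for the order-six graph. The only difference is organizational: the paper applies the solvable case of Corollary~\ref{Upper bound} up front ($|\rho(G)|\leq 2d=6$, which is exactly $n\leq \alpha(\Delta(G))\,d$ with $\alpha\leq 2$ from P\'alfy's Condition), so the order-eight graphs never arise, whereas you first use the general bound $|\rho(G)|\leq 9$ and then eliminate order eight by the very same inequality ($8\leq 2\cdot 3$ fails) --- an equivalent rearrangement, and your fallback of exhibiting explicit independent triples in Figures~\ref{Fig2}--\ref{Fig4} would also work.
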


\begin{proof}
Suppose that $\Delta(G)$ is a cubic graph for some solvable group $G.$ Then $|\rho(G)|\geq 4$ and
every vertex of $\Delta(G)$ has the same degree $3.$ By Lemma~\ref{disconnected regular graphs},
$\Delta(G)$ is connected. As every vertex of $\Delta(G)$ is an odd vertex, Lemma~\ref{hand shaking}
implies that  $|\rho(G)|$ is even. If $|\rho(G)|=4,$ then $\Delta(G)$ is a complete square and we
are done. So, assume that $|\rho(G)|\geq 6.$ By Corollary~\ref{Upper bound}, we have that
$|\rho(G)|\leq 6,$ so $|\rho(G)|=6.$ By Proposition~\ref{classification of 3-regular graphs},
$\Delta(G)$ is isomorphic to the graph in Fig.~\ref{Fig1}. Now Lemma~\ref{cubical graph with 6
vertices} yields a contradiction. Thus $\Delta(G)$ must be a complete square.
\end{proof}

\section{Prime graphs of nonsolvable groups}\label{sec4}

In this section, we give a proof of Theorem~A for nonsolvable groups. By the Ito-Michler theorem
\cite[Theorem~5.5]{Michler}, we know that $\rho(G)=\pi(G)$ for any almost simple groups $G$ since
$G$ has no nontrivial normal abelian Sylow subgroup. This fact will be used without further
reference. We first classify all simple groups whose prime graphs are $K_4$-free.
\begin{lemma}\label{simple groups} Let $S$ be a nonabelian simple group. Suppose that the prime
graph $\Delta(S)$ is $K_4$-free. Then one of the following cases holds.
\begin{enumerate}
  \item[$(1)$] $S\cong {\rm M}_{11}$ or ${\rm J}_1;$
  \item[$(2)$] $S\cong {\rm A}_n$ with $n\in\{5,6,8\};$
  \item[$(3)$] $S\cong \PSL_2(q)$ with $q=p^f\geq 4$ and $|\pi(q\pm 1)|\leq 3,$ where $p$ is prime;
  \item[$(4)$] $S\cong\PSL_3(q)$ with $q\in\{3,4,8\};$
  \item[$(5)$] $S\cong\PSU_3(q)$ with $q\in\{3,4,9\};$
  \item[$(6)$] $S\cong \PSp_4(3)$ or ${^2{\rm B}_2(q^2)}$ with $q^2=2^3$ or $2^5.$
\end{enumerate}
\end{lemma}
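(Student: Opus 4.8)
The plan is to systematically go through the classification of finite nonabelian simple groups and, for each family, determine exactly when the prime graph is $K_4$-free, that is, when $\Delta(S)$ contains no four pairwise-adjacent primes. Since $\rho(S)=\pi(S)$ for every nonabelian simple group (by Ito--Michler, as $S$ has no nontrivial abelian normal Sylow subgroup), the vertex set is always $\pi(S)$, so the immediate necessary condition is that $|\pi(S)|$ cannot be too large relative to the edge structure. A useful first reduction is that if $|\pi(S)|\geq 4$ and some irreducible character degree is divisible by four distinct primes, then $\Delta(S)$ contains $K_4$; more generally one needs to control how the primes in $\pi(S)$ can be pairwise joined. I would first handle the generic case showing that ``most'' simple groups have $\Delta(S)$ containing a $K_4$, thereby forcing $S$ into a short list, and only then verify the $K_4$-freeness for the survivors in items $(1)$--$(6)$.

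The main structural tool is the known description of character degrees of simple groups. For groups of Lie type of rank at least $2$ and for large rank, the \emph{Steinberg character} $\mathrm{St}$ has degree equal to the full power of the defining characteristic $p$ dividing $|S|$, and there are additional unipotent and semisimple degrees whose prime divisors typically include several of the cyclotomic factors $\Phi_d(q)$ of $|S|$. One shows that for such groups there is a single character degree divisible by at least four distinct primes, or alternatively that the primes split into four mutually adjacent classes, producing a $K_4$. This eliminates all groups of Lie type except those of very small rank over small fields, namely the families in $(3)$, $(4)$, $(5)$, $(6)$. For the alternating groups $\mathrm{A}_n$, I would use the fact that as $n$ grows the number of primes in $\pi(n!)$ grows and that there exist irreducible characters (e.g.\ of hook or near-hook shape) whose degrees absorb many primes simultaneously, forcing $K_4$ for all $n\geq 9$ and leaving only $n\in\{5,6,7,8\}$ to check by hand; the group $\mathrm{A}_7$ in fact has $\Delta=K_4$ (as noted in the introduction) and so is excluded, giving $n\in\{5,6,8\}$. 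The sporadic groups are treated by direct inspection of their known character tables (via the ATLAS), leaving only $\mathrm{M}_{11}$ and $\mathrm{J}_1$.

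For the rank-one families I would argue more explicitly. For $\PSL_2(q)$ with $q=p^f$, the character degrees are essentially $1,\,q,\,q-1,\,q+1,\,(q\pm 1)/\gcd$, so every degree is divisible by primes from just one of the three sets $\{p\}$, $\pi(q-1)$, $\pi(q+1)$; hence $\Delta(\PSL_2(q))$ is a union of at most three cliques, and it is $K_4$-free precisely when each of $\pi(q-1)$ and $\pi(q+1)$ (and with $p$ attached appropriately) has size at most $3$, which is the condition $|\pi(q\pm 1)|\leq 3$ in $(3)$. For $\PSL_3(q)$, $\PSU_3(q)$, $\PSp_4(3)$, and the Suzuki groups ${}^2\mathrm{B}_2(q^2)$, I would combine the explicit factorizations of $|S|$ into cyclotomic parts with the known generic character degrees to pin down exactly which small values of $q$ keep the graph $K_4$-free, yielding the listed cases. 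The main obstacle I expect is the bookkeeping for the rank-one and small-rank Lie type groups: ruling out all but finitely many $q$ requires showing that for large $q$ one of the cyclotomic factors $\Phi_d(q)$ acquires enough distinct prime divisors (using, e.g., Zsygmondy primes to guarantee new prime divisors appear), and then separately verifying each small surviving $q$ by a concrete degree computation, which is where the argument becomes case-heavy rather than conceptual.
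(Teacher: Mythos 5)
Your proposal is correct in outline and follows essentially the same strategy as the paper --- a sweep through the classification of finite simple groups using knowledge of character degrees --- but it differs in execution in a way worth noting. The paper does almost no degree computation itself: it first observes (via Huppert--Lempken's Table~1) that the simple groups with $|\pi(S)|=3$ are trivially $K_4$-free, and for $|\pi(S)|\geq 4$ it quotes White's determination of the degree graphs of simple groups wholesale --- Theorems~2.1 and 3.1 of White for sporadic and alternating groups, Theorem~4.1 for the Suzuki groups (the subgraph on $\pi((q^2-1)(q^4+1))$ is complete, forcing $|\pi(S)|=4$), Theorems~5.3 and 5.5 for $\PSL_3(q)$ and $\PSU_3(q)$ (the subgraph on $\pi(S)-\{p\}$ is complete, again forcing $|\pi(S)|=4$), and completeness of $\Delta(S)$ for all remaining families; once $|\pi(S)|=4$ is forced, Huppert--Lempken's Table~2 pins down the groups, replacing your proposed Zsygmondy-prime bookkeeping. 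Your plan to re-derive these facts from Steinberg and unipotent/semisimple degrees, hook characters for ${\rm A}_n$ with $n\geq 9$, and ATLAS inspection for sporadics is viable and would yield a more self-contained proof, but the steps you leave as assertions (``one shows that\dots'') are precisely the content of White's theorems, and a uniform ``rank $\geq 2$ gives a $K_4$'' claim must be handled with care because of the survivors $\PSL_4(2)\cong {\rm A}_8$ and $\PSU_4(2)\cong\PSp_4(3)$ (your treatment of $\mathrm{A}_8$ through the alternating case covers the former, but a family-by-family Lie-type sweep must notice the isomorphism). Your $\PSL_2(q)$ analysis --- the graph is a union of cliques on $\{p\}$, $\pi(q-1)$, $\pi(q+1)$, giving exactly the condition $|\pi(q\pm 1)|\leq 3$ --- matches the paper's, and your exclusion of ${\rm A}_7$ is correct. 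In short: same skeleton, with the paper buying brevity and rigor by citation where you propose (correctly but at sketch level) to rebuild the classification of degree graphs by hand.
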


\begin{proof}

If $|\pi(S)|=3,$ then $S$ is isomorphic to one of the following simple groups
\[\textrm{A}_5,\textrm{A}_6,\PSp_4(3)\cong\PSU_4(2),\PSL_2(7),\PSL_2(8),\PSU_3(3),\PSL_3(3),\PSL_2(17)\]
 by \cite[Table~1]{Huppert-Lempken}. Clearly, these groups are $K_4$-free and they appear somewhere in the conclusion of the lemma.
Hence, we can assume that $|\pi(S)|\geq 4.$

If $S$ is a sporadic simple group or an alternating group, then by using Theorems~$2.1$ and $3.1$
in \cite{White}, we can easily deduce that $\Delta(S)$ is $K_4$-free if and only if $S$ is one of
the groups in  $(1)$ and $(2)$.

Assume that $S\cong{}^2\textrm{B}_2(q^2)$ with $q^2=2^{2m+1}$ and $m\geq 1.$ By
\cite[Theorem~4.1]{White}, the subgraph of $\Delta(S)$ on $\pi((q^2-1)(q^4+1))$ is complete. Since
$\Delta(S)$ is $K_4$-free,  $|\pi((q^2-1)(q^4+1))|\leq 3$ and thus $3\leq |\pi(S)|\leq 4$ as
$\pi(S)=\{2\}\cup \pi((q^2-1)(q^4+1)).$ Hence, $|\pi(S)|=4$ and so by
\cite[Table~2]{Huppert-Lempken}, $m=1$ or $2.$ These cases appear in $(6).$

If $S\cong\PSL_2(q),$ then the result is clear as the character degree set of $S$ is known. In
particular, as $q\pm 1\in\cd(G),$  $|\pi(q\pm 1)|\leq 3.$ This gives $(3).$

Assume that $S\cong \PSL_3(q)$ or $\PSU_3(q)$ $(q>2).$ If $S\cong \PSL_3(4)$ or $\PSL_4(2)\cong
\textrm{A}_8,$ then $S$ is $K_4$-free. Assume next that $S$ is not one of these groups. By
Theorems~$5.3$ and $5.5$ in \cite{White}, the subgraph of $\Delta(S)$ on $\pi(S)-\{p\}$ is
complete, where $q=p^f$ and $p$ is a prime.  Obviously, $|\pi(S)|=4$ and thus $(4)$ and $(5)$ hold
by using \cite[Table~2]{Huppert-Lempken}.

For the remaining simple groups, $\Delta(S)$ is complete by using \cite{White} again and thus
$\Delta(S)$ contains a subgraph isomorphic to $K_4$ since $|\pi(S)|\geq 4.$ Hence, $\Delta(S)$ is
not $K_4$-free in any of these cases. This completes the proof.
\end{proof}

We now deduce the following result for almost simple groups.
\begin{lemma}\label{almost simple groups} Let $S$ be a nonabelian simple group and let $G$ be an
almost simple group with $S\unlhd G\leq \Aut(S).$ Suppose that $\Delta(G)$ is $K_4$-free with
maximal degree $d\leq 3.$ Then $S$ is one of the following simple groups:
\begin{enumerate}
  \item[$(1)$] $S\cong {\rm M}_{11}$ and $|\pi(S)|=4;$
  \item[$(2)$] $S\cong {\rm A}_n$ with $n\in\{5,6,8\}$ and $|\pi(S)|\leq 4;$
  \item[$(3)$] $S\cong \PSL_2(q)$ with $q=p^f\geq 4$ and $|\pi(q\pm 1)|\leq 3,$ where $p$ is prime;
  Moreover, if $q$ is odd, then $|\pi(q^2-1)|\leq 4$ so $|\pi(S)|\leq 5;$ and if $q$ is even, then $|\pi(S)|\leq 7;$
  \item[$(4)$] $S\cong\PSL_3(q)$ with $q\in\{2,3,4,8\}$ and $|\pi(S)|\leq 4;$
  \item[$(5)$] $S\cong\PSU_3(q)$ with $q\in\{3,4,9\}$ and $|\pi(S)|\leq 4;$
  \item[$(6)$] $S\cong \PSp_4(3)$ or ${^2{\rm B}_2(q^2)}$ with $q^2=2^3$ or $2^5$ and $\pi(S)\leq 4;$
\end{enumerate} and one of the following cases holds.
\begin{enumerate}
  \item[$(\textrm{i})$] $\pi(G)=\pi(S)$ and either $3\leq |\pi(S)|\leq 4$ or $S\cong\PSL_2(q)$ with
$|\pi(q^2-1)|\geq 4,$ $q$ being a prime power and $|\pi(S)|\leq 7$  or
  \item[$(\textrm{ii})$]
$\pi(G)=\pi(S)\cup \{r\},$ $S\cong\PSL_2(q),$ $q$ being a prime power, $r\in\pi(G)-\pi(S)$ is
adjacent to every prime in $\pi(q^2-1),$ and $|\pi(G)|=|\pi(S)|+1=5.$
\end{enumerate}
In all cases, we have $|\pi(G)|\leq 7.$ Moreover, if $|\pi(G)|\geq 6,$ then $G=S,$ where
$S\cong\PSL_2(2^f)$ with $f\geq 10;$ and if $|\pi(G)|\geq 5,$ then $S\cong\PSL_2(q)$ for some prime
power $q\geq 11.$
\end{lemma}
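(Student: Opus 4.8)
The plan is to pass from $G$ down to its socle $S$, invoke the classification already obtained in Lemma~\ref{simple groups}, and only then climb back up to $G$ using the structure of $\Out(S)$. Recall that $\rho(G)=\pi(G)$ and $\rho(S)=\pi(S)$ by Ito--Michler. Since $S\unlhd G$, for every $\theta\in\Irr(S)$ and every $\chi\in\Irr(G\mid\theta)$ we have $\theta(1)\mid\chi(1)$ by Clifford theory \cite[Corollary~11.29]{Isaacs}; hence $\rho(S)\subseteq\rho(G)$ and every edge of $\Delta(S)$ is an edge of $\Delta(G)$. Thus $\Delta(S)$ is a subgraph of the $K_4$-free graph $\Delta(G)$ of maximal degree $d\le 3$, so it too is $K_4$-free of maximal degree $\le 3$. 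First I would apply Lemma~\ref{simple groups} to restrict $S$ to its six families, and then exploit the extra hypothesis $d\le 3$, which is strictly stronger than $K_4$-freeness. This extra hypothesis is exactly what removes ${\rm J}_1$ (whose prime graph has the vertex $2$ adjacent to $3,5,7,19$, hence of degree $\ge 4$), and what bounds the $\PSL_2(q)$ case: since $q\pm1\in\cd(S)$, the subgraphs on $\pi(q-1)$ and $\pi(q+1)$ are complete, the defining prime is isolated, and for $q$ odd these two cliques meet exactly in the vertex $2$. Reading off the degree of this shared vertex gives $|\pi(q^2-1)|\le 4$ for $q$ odd and $|\pi(S)|\le 7$ for $q$ even. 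This establishes items $(1)$--$(6)$ together with their stated bounds on $|\pi(S)|$.

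Next I would determine the admissible overgroups $G$ of each $S$. For every family except $\PSL_2(q)$ we have $|\pi(S)|\le 4$, so $\Delta(G)$ automatically has maximal degree $\le 3$ and only $K_4$-freeness is a constraint; the only real question is whether $|G:S|$ can contribute a prime $r\notin\pi(S)$. Using the known structure of $\Out(S)$ together with the degree data in \cite{White}, I would check that either $\pi(\Out(S))\subseteq\pi(S)$ (so that $\pi(G)=\pi(S)$ for every $G$), or, in the few cases where a field automorphism supplies a prime $r\notin\pi(S)$ (for instance ${^2{\rm B}_2(2^3)}$), the corresponding extension makes $r$ adjacent to four or more primes and so violates $d\le 3$. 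Either way, any $G$ satisfying the hypotheses has $\pi(G)=\pi(S)$, which is case $(\mathrm{i})$ with $3\le|\pi(S)|\le 4$.

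The core of the argument is the family $S\cong\PSL_2(q)$, $q=p^f$, where $\Out(S)$ is generated by the diagonal automorphism of order $\gcd(2,q-1)$ and the field automorphisms of order $f$. Using the description of the character degrees of the almost simple groups with socle $\PSL_2(q)$ (degrees $1,q,q\pm1$, the exceptional half-degrees when $q$ is odd, and the induced degrees $r(q\pm1)$ produced by a field automorphism of prime order $r$), I would argue as follows. The diagonal automorphism and any field automorphism whose order is a prime already in $\pi(S)$ keep $\pi(G)=\pi(S)$, giving case $(\mathrm{i})$. A field automorphism of prime order $r\notin\pi(S)$ gives $\pi(G)=\pi(S)\cup\{r\}$, and the only new edges it can create join $r$ to primes of $\pi(q^2-1)$; the condition $\deg(r)\le 3$ then forces $|\pi(q^2-1)|\le 3$, whence $|\pi(G)|=|\pi(S)|+1=5$, which is precisely case $(\mathrm{ii})$. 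The main obstacle, and where the real work lies, is exactly this bookkeeping: pinning down which outer automorphisms preserve the degree bounds and verifying that a genuinely new prime attaches only to $\pi(q^2-1)$ and with degree at most three.

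Finally I would read off the numerical bounds. From $(1)$--$(6)$ the largest value of $|\pi(S)|$ is $7$, attained only for $\PSL_2(q)$ with $q$ even, and case $(\mathrm{ii})$ adds a prime only when $|\pi(S)|=4$; hence $|\pi(G)|\le 7$ in all cases. If $|\pi(G)|\ge 6$ then we are in case $(\mathrm{i})$ with $|\pi(S)|\ge 6$, which forces $q$ even (for $q$ odd $|\pi(S)|\le 5$, the other families give $\le 4$, and case $(\mathrm{ii})$ gives exactly $5$); moreover any proper field extension of $\PSL_2(2^f)$ would, by the analysis above, push the degree of some vertex above $3$, so $G=S\cong\PSL_2(2^f)$, and $|\pi(2^f-1)|+|\pi(2^f+1)|\ge 5$ with each summand at most $3$ forces $f\ge 10$ after inspecting the factorizations of $2^f\mp1$ for $f<10$. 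If $|\pi(G)|\ge 5$, then since every non-$\PSL_2$ family gives $|\pi(G)|\le 4$ we must have $S\cong\PSL_2(q)$, and $q\ge 11$ because $\PSL_2(q)$ with $q\le 9$ has $|\pi(G)|\le 4$. This completes the plan.
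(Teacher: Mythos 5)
Your proposal follows essentially the same route as the paper: pass to the socle via Clifford theory, prune the list of Lemma~\ref{simple groups} using the degree bound $d\le 3$ (this is indeed what removes ${\rm J}_1$ and yields the $\PSL_2(q)$ bounds via the clique structure on $\pi(q\pm1)$), then control $G/S$ through $\Out(S)$ and the degrees $|G:S|(q\pm1)$ of \cite[Theorem~A]{White13}, ending with the same numerical endgame ($f\ge 10$ by inspecting $2^f\mp 1$ for $f<10$, etc.). Two points deserve correction, though neither derails the argument. First, your stated mechanism for excluding ${}^2{\rm B}_2(8)\cdot 3$ is factually wrong: the character degrees of ${\rm Sz}(8)\cdot 3$ are $1,14,64,91,105,195$, so the new prime $r=3$ is adjacent exactly to $5,7,13$ (via $105=3\cdot5\cdot7$ and $195=3\cdot5\cdot13$) and has degree exactly $3$, not ``four or more.'' The extension is nonetheless excluded, because the subgraph on $\{3,5,7,13\}$ is a $K_4$ (equivalently, the vertex $7$ reaches degree $4$ through $14,91,105$); this is how the paper rules it out, citing the ATLAS and $K_4$-freeness rather than the degree of $r$. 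Second, in the $\PSL_2(q)$ case you only treat a single new prime arising from a field automorphism of prime order, but a priori $\tau=\pi(G)-\pi(S)$ could contain several primes, and you cannot reduce to prime-order subextensions since degrees of non-normal subgroups of $G$ need not divide degrees of $G$. The paper closes this by applying \cite[Theorem~A]{White13} directly to $G$: if $m$ is the product of all primes in $\tau$, then $m(q\pm1)$ divides some character degree of $G$, and since $|\pi(q+\delta)|\ge 2$ for some $\delta$, any $|\pi(m)|\ge 2$ would produce a degree with four distinct prime divisors, hence a $K_4$; thus $|\tau|=1$ before your degree count on $r$ is invoked. With these two repairs your outline matches the paper's proof.
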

\begin{proof}
As $S\unlhd G,$ the graph $\Delta(S)$ is a subgraph of $\Delta(G)$ so $\Delta(S)$ is $K_4$-free and
all vertices of $\Delta(S)$ have degree at most $3.$ Now the possibilities for $S$ are obtained
from Lemma~\ref{simple groups} by excluding simple groups having a vertex with degree exceeding
$3.$ Now $(i)$ is obvious. For $(ii),$ let $\tau=\pi(G)-\pi(S)$ and assume that $\tau$ is nonempty.
Clearly, this implies that either $S\cong{}^2\textrm{B}_2(8)$ or $S\cong\PSL_2(q)$ for some prime
power $q\geq 4.$ If the first case holds, then $G={}^2\rm{B}_2(8)\cdot 3;$ however $\Delta(G)$ is
not $K_4$-free by using \cite{ATLAS}. So, assume that the latter case holds. Then $|\pi(S)|\geq 4$
and hence $|\pi(q^2-1)|\geq 3.$ Observe that all primes in $\tau$ are odd and if $m$ is the product
of all distinct primes in $\tau,$ then by \cite[Theorem~A]{White13}, we deduce that $m(q\pm 1)$
divide some character degrees of $G.$ As $|\pi(S)|\geq 4,$ we have that $|\pi(q+\delta)|\geq 2$ for
some $\delta\in\{\pm 1\}$ so that $|\pi(m)|=1$ since otherwise $m(q+\delta)$ would have four
distinct prime divisors, a contradiction. Thus $\pi(m)=\pi(G)-\pi(S)=\{r\}$ and $r$ is adjacent to
all primes in $\pi(q^2-1)$ as wanted. Furthermore, as $r$ has degree at most $3$ in $\Delta(G),$ we
have that $|\pi(q^2-1)|=3,$ hence $|\pi(S)|=4$ and $|\pi(G)|=5.$

Clearly, $|\pi(G)|\leq 7$ by $(i)$ and $(ii).$ Now assume that $|\pi(G)|\geq 6.$ By $(ii),$ we have
that $\pi(G)=\pi(S).$ In particular, $|\pi(S)|\geq 6.$ Hence, $S\cong\PSL_2(2^f)$ with $f\geq 10.$
We next claim that $G=S.$ Suppose by contradiction that $G\neq S.$ By \cite[Theorem~A]{White13}
again, we know that $|G:S|(2^f\pm 1)\in \cd(G).$ Let $u$ be a prime divisor of $|G:S|.$ Then $u$ is
adjacent to every prime in $\pi(2^{2f}-1)-\{u\}.$ As $|\pi(S)|\geq 6,$ we have that
$|\pi(2^{2f}-1)|\geq 5$ and thus the degree of $u$ in $\Delta(G)$ is at least four, contradicting
the hypothesis of the lemma. The last claim is clear. The proof is now complete.
\end{proof}

We will need several results before we can prove the main result of this section. In the next two
lemmas, we obtain some restrictions on the structure of  nonsolvable groups whose prime graph is a
subgraph of a $3$-regular graph  with at least five vertices.
 \begin{lemma}\label{characteristic simple}
Suppose that the prime graph $\Delta(G)$ of some nonsolvable group $G$ is $K_4$-free with maximal
degree $d\leq 3$ and $|\rho(G)|\geq 5.$ Then every nonabelian chief factor of $G$ is simple.
 \end{lemma}

\begin{proof}
Assume that $M/N$ is a nonabelian chief factor of $G.$ Then $M/N\cong S^k$ for some nonabelian
simple group $S$ and some integer $k\geq 1.$  Let $C/N=\Centralizer_{G/N}(M/N).$ Then $C\unlhd G$
and $G/C$ has a unique minimal normal subgroup $M/N.$ Assume that $k>1.$ Since $G/C$ has no
nontrivial normal abelian Sylow subgroup, Ito-Michler's theorem yields that $\rho(G/C)=\pi(G/C)$
and thus $|\rho(G/C)|=|\pi(G/C)|\geq 3$ by \cite[Theorem~3.10]{Isaacs}. Applying \cite[Main
Theorem]{Lewis-McVey}, the prime graph $\Delta(G/C)$ is complete. Since $\Delta(G/C)$ is a subgraph
of $\Delta(G)$ which is $K_4$-free, $\Delta(G/C)$ is also $K_4$-free and thus $\Delta(G/C)$ must be
a triangle with vertex set $\{p_i\}_{i=1}^3.$  Let $L$ be a normal subgroup of $MC$ such that
$L/C\cong S.$ Since $\rho(G/C)=\pi(G/C),$ for any $r\in \pi:=\rho(G)-\pi(G/C),$ we deduce that
$r\in\rho(C)$ and thus there exists $\theta\in\Irr(C)$ with $r\mid \theta(1).$ By
\cite[Lemma~$4.2$]{Hung}, either $\theta$ extends to $\theta_0\in\Irr(L)$ or $\psi(1)/\theta(1)$ is
divisible by two distinct primes in $\pi(L/C)$ for some $\psi\in\Irr(L|\theta).$ If the first case
holds, then $r$ is adjacent to every prime $p_i,1\leq i\leq 3,$ and so the subgraph of $\Delta(G)$
on $\{r,p_1,p_2,p_3\}$ is a complete square, which is impossible. If the latter case holds, then
$r$ is adjacent to two distinct primes, say $p_i\neq p_j.$ As $|\rho(G)|\geq 5,$ we can find $r\neq
s\in \pi,$ and thus with the same argument as above, $s$ is also adjacent to two distinct primes in
$\{p_k\}_{k=1}^3.$ It follows that there exists a prime $p_m,$ $1\leq m\leq 3,$ such that $p_m$ is
adjacent to both $r$ and $s$ in $\Delta(L).$ As $\Delta(G/C)$ is a triangle, $p_m$ is adjacent to
every prime in $\pi(G/C)-\{p_m\},$ so its degree in $\Delta(G)$ is at least $4,$ a contradiction.
Therefore, $k=1$ as wanted.\end{proof}

\begin{lemma}\label{solvable radical} Assume the hypotheses of Lemma~\ref{characteristic simple}.
Let $N$ be the solvable radical of $G$ and let  $M/N$ be a chief factor of $G.$ Then $G/N$ is
almost simple with socle $M/N.$
\end{lemma}

\begin{proof} Since $N$ is the largest normal solvable subgroup of $G,$ $M/N$ is
nonabelian and so by Lemma~\ref{characteristic simple}, $M/N\cong S,$ where $S$ is a nonabelian
simple group. Let $C/N=\Centralizer_{G/N}(M/N).$ It suffices to show that $C=N.$ Suppose by
contradiction that $C\neq N$ and let $K/N$ be a minimal normal subgroup of $G/N$ with $K\leq C.$
Then $K/N$ is a nonabelian chief factor of $G,$ so by Lemma~\ref{characteristic simple} again,
$K/N\cong T,$ where $T$ is a nonabelian simple group. Notice that $K\cap M=N,$ $CM/N\cong C/N\times
M/N\unlhd G/N$ is a direct product and $\pi(T)\subseteq \pi(C/N),$ hence $2\in\rho(C/N)\cap
\pi(M/N).$ It follows that $2$ is adjacent to every prime in $\rho(C/N)\cup \pi(M/N).$ As the
degree of $2$ in $\Delta(G)$ is at most $3,$ we deduce that $3\leq |\rho(C/N)\cup \pi(M/N)|\leq 4.$

Assume first that $|\rho(C/N)\cup \pi(M/N)|= 4.$ Since $|\rho(C/N)|\geq |\pi(T)|\geq 3$ and
$|\pi(M/N)|\geq 3,$ we deduce that $|\rho(C/N)\cap \pi(M/N)|\geq 2.$ Writing $$\pi:=\rho(C/N)\cap
\pi(M/N)=\{r_i\}_{i=1}^k,$$ where $r_i,1\leq i\leq k,$ are distinct primes and $k\geq 2.$ As the
subgraph of $\Delta(G)$ on $\pi$ is complete, we deduce that $2\leq k\leq 3.$ If $k=2,$ then
$\rho(C/N)=\pi\cup \{p\}$ and $\pi(M/N)=\pi\cup \{q\},$ where $p$ and $q$ are distinct primes and
different from $r_i,1\leq i\leq k.$ Now, in the subgraph $\Delta(C/N\times M/N)$ of $\Delta(G),$ we
see that $p$ is adjacent to every prime in $\pi(M/N)$ and $q$ is adjacent to every prime in
$\rho(C/N),$ so $\Delta(C/N\times M/N)$ is isomorphic to $K_4,$ a contradiction. Similarly, if
$k=3,$ then the subgraph of $\Delta(G)$ on the set $\{r_i\}_{i=1}^3$ is a triangle and the vertex
$p\in(\pi(M/N)\cup \rho(C/N))-\pi$ is adjacent to every prime in $\pi,$ hence the subgraph of
$\Delta(G)$ on $\{p,r_1,r_2,r_3\}$ is a complete square, a contradiction.

Assume next that $|\rho(C/N)\cup \pi(M/N)|= 3.$ It follows that $|\pi(M/N)|=3$ and thus by applying
\cite[Table~1]{Huppert-Lempken}, we deduce that $\pi(G/C)=\pi(MC/C)=\pi(M/N),$ where $G/C$ is an
almost simple group with socle $M/N.$ As in the previous case, we have that $|\rho(C/N)|\geq 3$ and
thus $$\rho(C/N)=\pi(M/N)=\pi(G/N)=\{p_i\}_{i=1}^3.$$ Clearly, the subgraph of $\Delta(G)$ on
$\{p_i\}_{i=1}^3$ is a triangle. Since $|\rho(G)|\geq 5$ and $|\pi(G/N)|=3,$ we deduce that
$\rho(N)$ contains at least two distinct primes $r_i,i=1,2,$ such that
$r_i\not\in\rho(G/C)=\pi(M/N)$ for $i=1,2.$ Let $\theta_i\in \Irr(N)$ with $r_i\mid \theta_i(1)$
for $i=1,2.$ By \cite[Lemma~$4.2$]{Hung}, for each $i=1,2,$ either $\theta_i$ extends to $M$ or
$\psi_i(1)/\theta_i(1)$ is divisible by two distinct primes in $\pi(M/N)$ for some
$\psi_i\in\Irr(M|\theta_i).$ If $\theta_j$ is extendible to $M$ for some $j=1,2,$ then $r_j$ is
adjacent to every prime $p_i,1\leq i\leq 3,$ and so the subgraph of $\Delta(G)$ on
$\{r_j,p_1,p_2,p_3\}$ is a complete square, a contradiction. Hence, for each $j=1,2,$ $r_j$ is
adjacent to two distinct primes in $\{p_i\}_{i=1}^3.$ It follows that some vertex $p_m,$ $1\leq
m\leq 3,$ has degree at least four in $\Delta(G),$ which is impossible.

Therefore, $C=N$ and thus $G/N$ is almost simple with socle $M/N$ as wanted.
\end{proof}

We eliminate the graphs in Figures~\ref{Fig2}-\ref{Fig4} from being the prime graphs of any
nonsolvable groups in the next lemma.
\begin{lemma}\label{nontrivial tau}
Suppose that the prime graph $\Delta(G)$ of a nonsolvable group $G$ is a connected $3$-regular
graph with $|\rho(G)|\geq 6.$ Then $|\rho(G)|=6$ and $\Delta(G)$ is isomorphic to the graph in
Fig.~\ref{Fig1}.
\end{lemma}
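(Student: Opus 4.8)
The plan is to combine the structural reductions from Lemmas~\ref{characteristic simple} and \ref{solvable radical} with the explicit classification of almost simple groups in Lemma~\ref{almost simple groups}, and then use the very restrictive shapes of the three candidate graphs in Figures~\ref{Fig2}--\ref{Fig4} to force a contradiction. Since $\Delta(G)$ is a connected $3$-regular graph with $|\rho(G)|\ge 6$, Proposition~\ref{classification of 3-regular graphs} already tells us that $\Delta(G)$ is one of the four graphs in Figures~\ref{Fig1}--\ref{Fig4}. The statement to prove is that for nonsolvable $G$, only Figure~\ref{Fig1} survives. So the real content is to eliminate Figures~\ref{Fig2}, \ref{Fig3}, and \ref{Fig4}. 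Note that each of these graphs is $K_4$-free with maximal degree $d=3$, and has $|\rho(G)|\in\{6,8\}\ge 5$, so the hypotheses of Lemmas~\ref{characteristic simple} and \ref{solvable radical} are met.

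First I would apply Lemma~\ref{solvable radical}: letting $N$ be the solvable radical of $G$ and $M/N$ a chief factor, we get that $G/N$ is almost simple with socle $S\cong M/N$, and $S$ must appear in the list of Lemma~\ref{almost simple groups}. The crucial quantitative consequence of that lemma is that $\pi(G/N)=\pi(G/C)$ has at most a few primes, and more importantly that the primes of $G/N$ sit inside $\rho(G)$ as a complete subgraph whose size is tightly controlled. I would then set $\tau=\rho(G)-\pi(G/N)$, the set of primes coming only from the solvable radical $N$. The key technical tool is \cite[Lemma~4.2]{Hung} (already used in the two structural lemmas): for each $r\in\tau$ with $r\mid\theta(1)$ for some $\theta\in\Irr(N)$, either $\theta$ extends to $M$ — in which case $r$ is adjacent to \emph{every} prime in $\pi(M/N)=\pi(S)$, forcing $r$ together with a triangle of $S$ to span a $K_4$ (impossible) — or else $r$ is adjacent to at least two distinct primes of $\pi(S)$. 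I expect this dichotomy to be the engine of the whole argument: it pins down, for each radical prime, a minimum number of edges landing on the almost-simple part.

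Next I would exploit the combinatorial rigidity of Figures~\ref{Fig2}--\ref{Fig4}. In each of these graphs the "triangle structure" is explicit: Figure~\ref{Fig2} has four triangles, Figure~\ref{Fig3} has two, Figure~\ref{Fig4} has one, and $\Delta(S)$ must be a \emph{complete} subgraph sitting inside $\Delta(G)$ (because $\pi(S)$ induces a complete graph when $|\pi(S)|\ge 3$, by Lemma~\ref{almost simple groups} and the completeness results of White). Since $\Delta(G)$ is $K_4$-free, $|\pi(S)|\le 3$, so $\Delta(S)$ is either a single edge ($|\pi(S)|=2$, excluded since a nonabelian simple group has $|\pi(S)|\ge 3$) or a triangle. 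Thus $\pi(S)$ must be the vertex set of one of the triangles present in the graph, and every radical prime $r\in\tau$ must be adjacent to $\ge 2$ of those three triangle-vertices. But in the graphs of Figures~\ref{Fig2}--\ref{Fig4}, any triangle vertex already has its full degree $3$ used up by the other two triangle vertices plus exactly one further neighbor — so it has \emph{at most one} edge available to a prime outside the triangle. Counting the edges from $\tau$ into the triangle $\pi(S)$: on one side each $r\in\tau$ demands at least two such edges, while on the other side the three triangle vertices supply at most three such edges total. Since $|\tau|=|\rho(G)|-3\ge 3$ in these cases, we need at least $2|\tau|\ge 6$ edges but have at most $3$ available, an immediate contradiction.

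The main obstacle I anticipate is the bookkeeping in the borderline configurations: I must verify that in each of Figures~\ref{Fig2}--\ref{Fig4} the chosen triangle really does leave each of its vertices with only one external edge, and I must handle the possibility that $\pi(S)$ does not coincide with a triangle at all — which would contradict the requirement that $\Delta(S)$ be a triangle, giving an even quicker contradiction. I would organize the proof by fixing $\pi(S)$ as a triangle, then bounding from above the number of edges leaving $\{p_1,p_2,p_3\}$ in each figure (reading it off the explicit edge lists), and bounding from below the number required by the extension dichotomy applied to each radical prime. Once the edge count fails in all three graphs, only Figure~\ref{Fig1} remains, giving $|\rho(G)|=6$ and the claimed conclusion. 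I should also double-check the degenerate possibility that $N$ contributes no new primes (so $\tau=\emptyset$ and $\rho(G)=\pi(G/N)$ has only $3$ primes), which is incompatible with $|\rho(G)|\ge 6$ and so cannot arise.
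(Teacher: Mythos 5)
Your proposal breaks down at its central premise: the claim that $\Delta(S)$ is a \emph{complete} subgraph of $\Delta(G)$, forcing $|\pi(S)|\leq 3$ and $\pi(S)$ to be a triangle of the figure. White's completeness results cover most simple groups of Lie type but precisely \emph{not} the families that survive the $K_4$-free condition: Lemma~\ref{simple groups} shows the $K_4$-free cases are dominated by $S\cong\PSL_2(q)$, whose prime graph is far from complete (the defining characteristic is essentially isolated from $\pi(q-1)\cup\pi(q+1)$, and $\pi(q-1)$ is not joined to $\pi(q+1)$), and by groups like ${\rm J}_1$ with $|\pi(S)|=6$. Indeed Lemma~\ref{almost simple groups} explicitly allows $|\pi(S)|$ up to $7$ for $S\cong\PSL_2(2^f)$, and in the actual proof this is exactly the case that materializes: after showing $1\leq|\tau|\leq 2$ with no edges inside $\tau$ (P\'alfy applied to $\tau\subseteq\rho(N)$ plus the extension dichotomy of \cite[Lemma~4.2]{Hung}), one gets $|\pi(G/N)|\geq 6$, whence $G=M$ and $G/N\cong\PSL_2(2^f)$ with $f\geq 10$. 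So your edge count ($2|\tau|\geq 6$ edges demanded versus $3$ available from a triangle) never gets off the ground, because $\pi(S)$ is not a triangle and $|\tau|$ is at most $2$, not $|\rho(G)|-3$; your closing ``degenerate case'' remark rests on the same false bound $|\pi(S)|\leq 3$.

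The part of your plan that is sound --- the dichotomy from \cite[Lemma~4.2]{Hung}, giving for each $r\in\tau$ either adjacency to all of $\pi(M/N)$ (killed by $K_4$-freeness or the degree bound) or adjacency to two distinct primes of $\pi(M/N)$ --- is indeed the engine of the paper's argument, but it must be supplemented by substantially harder work that your proposal omits. The paper shows that each $r\in\tau$ is in fact the common vertex of \emph{two triangles sharing an edge}, and this step requires character-theoretic input specific to $\PSL_2(2^f)$: if $r$'s third neighbor $p_3$ formed no triangle with $r$, then Clifford theory forces $|G:I_G(\theta)|$ to be a power of $p_3$, yet by \cite[Hauptsatz~II$.8.27$]{Huppert} no index of a maximal subgroup of $\PSL_2(2^f)$ is a prime power once $|\pi(2^f\pm 1)|\geq 2$ (the triviality of the Schur multiplier is also needed to rule out the invariant case). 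This two-triangles-sharing-an-edge property eliminates Figures~\ref{Fig3} and \ref{Fig4} outright, and Figure~\ref{Fig2} then falls to a final count: with $|\pi(G/N)|=6$ and $\tau=\{r,s\}$ both serving as apexes of triangle pairs, the subgraph on $\pi(G/N)$ would be triangle-free, contradicting \cite[Theorem~A]{Hung}. Without the prime-power-index analysis there is no known purely combinatorial route to the conclusion, so the gap in your proposal is not a bookkeeping issue but a missing idea.
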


\begin{proof}
By Proposition~\ref{classification of 3-regular graphs}, $\Delta(G)$ is  isomorphic to one of the
graphs in Figures \ref{Fig1}-\ref{Fig4} and $|\rho(G)|=6$ or $8.$ As the graph in Fig.~\ref{Fig1}
is the only graph of order $6,$ it suffices to show that $|\rho(G)|=6.$ By way of contradiction,
assume that $|\rho(G)|>6.$ Then $\rho(G)=8$ and so $\Delta(G)$ is one of the graphs in
Figures~\ref{Fig2}-\ref{Fig4}. As $\Delta(G)$ is $K_4$-free with maximal degree $d\leq 3.$ By
Lemma~\ref{solvable radical}, $G/N$ is almost simple with nonabelian simple socle $M/N,$ hence
$\rho(G/N)=\pi(G/N).$ Let $\tau=\rho(G)-\pi(G/N).$ Then $\tau\subseteq \rho(N).$ By
Lemma~\ref{almost simple groups}, $|\pi(G/N)|\leq 7,$ so $|\tau|=|\rho(G)|-|\pi(G/N)|\geq 1.$

\smallskip {\bf Claim~1.} $1\leq |\tau|\leq 2$ and the subgraph of $\Delta(G)$ on $\tau$ has no
edge.

If $|\tau|\geq 3,$ then since $\tau\subseteq \rho(N)$ with $N$ being solvable, there is an edge
among vertices in $\tau$ by \Pal. Thus it suffices to show that there is no edge among vertices in
$\tau.$ By way of contradiction, assume that $r\neq s\in\tau$ are joined to each other in
$\Delta(G)$ via $\psi\in\Irr(G).$ Let $\theta\in\Irr(N)$ be an irreducible constituent of $\psi_N.$
Since $rs\mid \psi(1)$ and $\psi(1)/\theta(1)\mid |G:N|,$ where $\gcd(rs,|G:N|)=1,$ we deduce that
$rs\mid \theta(1).$ By \cite[Lemma~4.2]{Hung}, either $\chi(1)/\theta(1)$ is divisible by two
distinct primes in $\pi(M/N)$ for some $\chi\in\Irr(M|\theta)$ or $\theta$ extends to
$\theta_0\in\Irr(M).$ The first case clearly cannot occur since otherwise $\chi(1)$ would be
divisible by at least $4$ distinct primes so $\Delta(M)$ would contain a complete square, a
contradiction. Thus the latter case holds. By Gallagher's Theorem \cite[Corollary~6.17]{Isaacs},
$\theta\lambda\in\Irr(M)$ for all $\lambda\in\Irr(M/N),$ hence $r$ is adjacent to every prime in
$\pi(M/N),$ where $|\pi(M/N)|\geq 3.$ Since $r$ is also adjacent to $s\not\in\pi(M/N),$ its degree
in $\Delta(G)$ is at least $4,$ a contradiction.

\smallskip
{\bf Claim~2.} $G=M$ and $G/N\cong\PSL_2(2^f)$ with $f\geq 10$ and $2\leq |\pi(2^f\pm 1)|\leq 3.$

Since $|\tau|\leq 2$ by Claim~1,  we have $|\pi(G/N)|=|\rho(G)|-|\tau|\geq 6.$ Lemma~\ref{almost
simple groups} now yields that $G/N=M/N\cong \PSL_2(2^f)$ with $f\geq 10$ and $|\pi(2^f\pm 1)|\leq
3.$ It follows that $|\pi(2^f\pm 1)|\geq 2$ as  $|\pi(2^{2f}-1)|=|\pi(G/N)-\{2\}|\geq 5.$

\smallskip
{\bf Claim~3.} For each $r\in \tau,$ there exist three distinct vertices $p_k\in\pi(G/N),k=1,2,3,$
such that the subgraphs of $\Delta(G)$ on $\{r,p_i,p_u\}$ and $\{r,p_i,p_v\}$ are triangles, where
$\{p_i,p_u,p_v\}=\{p_k\}_{k=1}^3.$ Hence, $\Delta(G)$ is isomorphic to the graph in
Fig.~\ref{Fig2}.

Let $r\in\tau.$ Then $r\mid \mu(1)$ for some $\mu\in\Irr(N).$ By \cite[Lemma~4.2]{Hung}, $r$
together with two distinct vertices in $\pi(G/N),$ say $p_1,p_2,$ will form a triangle in
$\Delta(G).$ Since $r$ is not adjacent to any primes in $\tau-\{r\}$ by Claim~1, $r$ is adjacent to
some prime in $\pi(G/N)-\{p_1,p_2\},$ say $p_3,$ via $\chi\in\Irr(G).$

Suppose that $p_3$ is not adjacent to $p_1$ nor $p_2.$ Since $r$ is of degree $3$ in $\Delta(G)$
and it is adjacent to every vertex $p_i,1\leq i\leq 3,$ it is not joined to any other vertices. In
particular, $\{r,p_3\}$ is not an edge of any triangle in $\Delta(G).$ Let $\theta\in\Irr(N)$ be an
irreducible constituent of $\chi_N.$ Then $\chi(1)/\theta(1)$ divides $|G/N|$ and thus
$\chi(1)/\theta(1)$ is prime to $r,$ therefore, $r\mid \theta(1).$ As $\{r,p_3\}\subseteq
\pi(\chi(1)),$ by our assumption on $r$ and $p_3$ we deduce that $\pi(\chi(1))=\{r,p_3\}.$ As
$|\pi(G/N)|\geq 6,$ $\theta$ is not extendible to $G$ as otherwise, $r$ would be adjacent to every
prime in $\pi(G/N)$ and so its degree in $\Delta(G)$ is at least six, a contradiction. It follows
that $\chi(1)/\theta(1)=p_3^a$ for some integer $a\geq 1.$ Also, as the Schur multiplier of
$G/N\cong\PSL_2(2^f)$ with $f\geq 10$ is trivial, $\theta$ is not $G$-invariant. Therefore,
$I=I_G(\theta)\lneq G.$ By Clifford's Theory, there exists $\phi\in\Irr(I|\theta)$ such that
$\chi=\phi^G$ and $\phi_N=e\theta$ for some integer $e\geq 1.$ Then
$\chi(1)=|G:I|\phi(1)=|G:I|e\theta(1).$ Hence, $\chi(1)/\theta(1)=e|G:I|=p_3^a.$ In particular,
$|G:I|$ is a prime power. Also, $|G:I|$ is divisible by the index of some maximal subgroup of
$G/N\cong\PSL_2(2^f).$ By \cite[Hauptsatz~II$.8.27$]{Huppert}, the indices of maximal subgroups of
$\PSL_2(2^f)$ are
\begin{equation}\label{eqn} 2^{f-1}(2^f+1), 2^{f-1}(2^f-1), 2^f+1,\frac{2^f(2^{2f}-1)}{2^b(2^{2b}-1)},
\end{equation}
where $f/b=n\geq 2$ is a prime. However, since $|\pi(2^f\pm 1)|\geq 2,$ none of these indices is a
prime power. This contradiction shows that $p_3$ must be adjacent to $p_1$ or $p_2,$ which proves
the first part of the claim. As the graphs in Figures~\ref{Fig3} and \ref{Fig4} do not contain any
two triangles sharing a common edge, $\Delta(G)$ must be the graph in Fig.~\ref{Fig2}.

\smallskip
{\bf The final contradiction.}

By Claims~$2$ and $3,$ $\Delta(G)$ is isomorphic to the graph in Figure~$2,$  $|\rho(G)|=8$ and
$G/N\cong\PSL_2(2^f)$ with $f\geq 10.$ Let $r\in \tau.$ By Claim~3, we can assume that the
subgraphs on $\{r,p_2,p_3\}$ and $\{r,p_2,p_1\}$ are two triangles in $\Delta(G).$ It follows that
the subgraph of $\Delta(G)$ on $\rho(G)-\{r\}$ has exactly two triangles sharing a common edge.
Hence, $|\pi(G/N)|\leq 6$ as if $|\pi(G/N)|\geq 7,$ then $|\pi(G/N)|=7$ and $\Delta(G/N)$ has two
disjoint triangles, which is impossible as $\pi(G/N)\subseteq \rho(G)-\{r\}.$ As $|\tau|\leq 2$ by
Claim~1, we deduce that $|\pi(G/N)|=6$ and so $\tau=\{r,s\}$ with $r\neq s.$ By Claim~1, $r$ and
$s$ are not adjacent and by Claim~3, $s$ is a common vertex of two triangles. It follows that the
subgraph of $\Delta(G)$ on $\pi(G/N)$ has no triangle with $6$ vertices, hence $\Delta(G/N)$ has no
triangle with $6$ vertices, contradicting \cite[Theorem~A]{Hung}. The proof is now
complete.\end{proof}

We now complete the proof of Theorem~A for nonsolvable groups.
\begin{theorem}\label{3-regular nonsolvable graph} Let $G$ be a nonsolvable group. If $\Delta(G)$ is a
cubic graph, then $\Delta(G)$ is isomorphic to a complete graph of order four.
\end{theorem}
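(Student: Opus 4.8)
The plan is to reduce the nonsolvable case to the machinery already assembled in this section and then invoke the order bound from Corollary~\ref{Upper bound}. Suppose $\Delta(G)$ is a cubic graph for some nonsolvable group $G.$ As in the solvable case, I would first observe that $|\rho(G)|\geq 4$ and, by Lemma~\ref{disconnected regular graphs}, that $\Delta(G)$ is connected. Since every vertex is an odd vertex, Lemma~\ref{hand shaking} forces $|\rho(G)|$ to be even. If $|\rho(G)|=4,$ then a connected $3$-regular graph on four vertices is necessarily $K_4,$ the complete square, and we are done immediately.

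The substance lies in ruling out all larger orders. First I would dispose of the case $|\rho(G)|\geq 6$ at once: a connected $3$-regular graph with six or more vertices is not isomorphic to $K_4,$ and it is $K_{d+1}=K_4$-free (a connected $k$-regular graph is $K_{k+1}$-free exactly when it is not $K_{k+1}$). Hence Corollary~\ref{Upper bound} applies and yields $|\rho(G)|\leq 3k=9.$ Combined with the parity constraint, this leaves only $|\rho(G)|=6$ or $|\rho(G)|=8.$ But Lemma~\ref{nontrivial tau} asserts precisely that a nonsolvable group whose prime graph is a connected $3$-regular graph with $|\rho(G)|\geq 6$ must have $|\rho(G)|=6$ with $\Delta(G)$ isomorphic to the graph in Fig.~\ref{Fig1}. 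This eliminates $|\rho(G)|=8$ entirely and pins down the only remaining candidate of order six.

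The final step is to contradict the surviving case $|\rho(G)|=6,$ $\Delta(G)\cong$ Fig.~\ref{Fig1}. Here I would invoke Lemma~\ref{cubical graph with 6 vertices}, which shows that any group whose prime graph is the graph in Fig.~\ref{Fig1} satisfies $G'=G'',$ and in particular that such a group is \emph{nonsolvable}. At first glance this does not produce a contradiction for a nonsolvable $G;$ the real obstruction must already be packaged into the preceding results. The correct reading is that Lemma~\ref{nontrivial tau} has, in fact, done the heavy lifting—so the clean route is simply to cite Lemma~\ref{nontrivial tau} to force $\Delta(G)\cong$ Fig.~\ref{Fig1}, and then to note that this graph is excluded because a careful examination (as in Claim~3 and the final contradiction of that lemma, run instead for the order-six graph) is incompatible with the almost simple quotient structure supplied by Lemma~\ref{solvable radical}. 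Concretely, Lemma~\ref{solvable radical} gives that $G/N$ is almost simple with nonabelian simple socle $M/N,$ so $|\pi(G/N)|\geq 3,$ and one tracks the triangle $\{p_1,p_2,p_3\}$ of Fig.~\ref{Fig1} against the admissible simple socles from Lemma~\ref{almost simple groups}.

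The main obstacle I anticipate is precisely closing off the order-six graph in Fig.~\ref{Fig1} for nonsolvable $G,$ since—unlike the solvable case where Lemma~\ref{cubical graph with 6 vertices} delivers an outright contradiction via $G'=G''=1$—the nonsolvable case cannot rely on that collapse. The resolution is to show that no nonabelian simple socle $M/N$ permitted by Lemma~\ref{almost simple groups} can have $\pi(M/N)$ sitting inside a triangle of Fig.~\ref{Fig1} while the three primes of $\tau=\rho(G)-\pi(G/N)$ attach as the graph demands: each prime $r\in\tau$ of degree three would need to extend or not extend a character of $N$ in a way that, via Gallagher's Theorem (\cite[Corollary~6.17]{Isaacs}) and \cite[Lemma~4.2]{Hung}, either inflates the degree of some socle prime beyond three or produces a $K_4.$ Carrying out this bookkeeping for the order-six graph, parallel to the argument in Lemma~\ref{nontrivial tau}, is where the genuine work concentrates, but it follows the same template already established and so should go through cleanly.
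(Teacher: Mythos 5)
Your reduction is exactly the paper's: parity via Lemma~\ref{hand shaking}, connectivity via Lemma~\ref{disconnected regular graphs}, the immediate $|\rho(G)|=4$ case, and Lemma~\ref{nontrivial tau} to force $|\rho(G)|=6$ with $\Delta(G)$ isomorphic to the graph in Fig.~\ref{Fig1}; you also correctly recognize that Lemma~\ref{cubical graph with 6 vertices} yields no contradiction here, since for nonsolvable $G$ it only gives $G'=G''$. But from that point your proposal has a genuine gap: eliminating the order-six graph for nonsolvable $G$ is the bulk of the theorem's proof and is \emph{not} ``packaged into the preceding results,'' nor does it follow the template of Lemma~\ref{nontrivial tau} in the way you describe. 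Your sketch hinges entirely on the primes of $\tau=\rho(G)-\pi(G/N)$ --- you even speak of ``the three primes of $\tau$'' --- but a P\'{a}lfy argument (Claim~1 of the paper's proof) shows $|\tau|\leq 2$, and the paper then proves (Claim~2, using \cite[Lemma~3.2]{Hung}, that $M/N\cong\PSL_2(q)$ with $q\geq 11$; then Claim~3) that in fact $\tau=\emptyset$. Claim~3 itself is substantial: for $r\in\tau$ one takes $\chi$ with $\pi(\chi(1))=\{r,p_3\}$ and splits according to whether the constituent $\theta\in\Irr(N)$ is $M$-invariant, handling the non-invariant case via Clifford theory and the indices of maximal subgroups of $\PSL_2(q)$ (including the exceptional $q=11$, $I/N\cong{\rm A}_5$ case) plus a Gallagher argument, and the invariant case via the Schur representation group $\SL_2(q)$ and character triple isomorphisms giving $\theta(1)(q\pm 1)\in\cd(M|\theta)$. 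So the decisive case is precisely the one your template cannot touch: $\tau$ is empty, and there are no primes in it to run your extend/not-extend bookkeeping on.

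In that final case $\pi(G/N)=\rho(G)$ has six primes, so Lemma~\ref{almost simple groups} forces $G=M$ and $G/N\cong\PSL_2(2^f)$ with $f\geq 10$, and the contradiction comes from a mechanism absent from your proposal: the vertex $2$ is isolated in $\Delta(G/N)$, so $N\neq 1$ and the three edges at $2$ in Fig.~\ref{Fig1} must arise from characters lying over nontrivial $\theta\in\Irr(N)$. Since the matching edge $\{2,p_i\}$ lies in no triangle of Fig.~\ref{Fig1}, one gets $\pi(\chi(1))=\{2,p_i\}$, and then either Clifford theory against the maximal-subgroup indices in \eqref{eqn} (using \cite[Lemma~2.4]{Hung}), or extendibility of $\theta$ (trivial Schur multiplier) plus Gallagher, manufactures a triangle on $\{p_4,p_5,p_i\}$ that Fig.~\ref{Fig1} does not contain. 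None of this appears in your write-up, and ``it follows the same template \ldots\ so should go through cleanly'' does not discharge it: your proposal establishes the (correct) reduction but leaves the actual content of the theorem unproved.
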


\begin{proof}
Suppose that $G$ is a nonsolvable group such that $\Delta(G)$ is $3$-regular. Then $|\rho(G)|\geq
4$ and by Lemma~\ref{hand shaking}, $|\rho(G)|$ is even. By Lemma~\ref{disconnected regular
graphs}, $\Delta(G)$ is connected. If $|\rho(G)|=4,$ then we are done. So, we assume that
$|\rho(G)|\geq 6.$ By Lemma~\ref{nontrivial tau}, $|\rho(G)|=6$ and $\Delta(G)$ is the graph in
Fig.~\ref{Fig1}. Let $N$ be the solvable radical of $G$ and let $M$ be a normal subgroup of $G$
such that $M/N$ is a chief factor of $G.$ Lemma~\ref{solvable radical} implies that $G/N$ is almost
simple with simple socle $M/N.$ Let $\tau=\rho(G)-\pi(G/N).$ Then $\tau\subseteq\rho(N).$

\smallskip
{\bf Claim~1.} The subgraph of $\Delta(G)$ on $\tau$ has no edge, $|\tau|\leq 2$ and
$|\pi(M/N)|\geq 4.$

If $|\tau|\geq 3,$ then there is an edge among two distinct primes in $\tau$ by \Pal; and if
$|\pi(M/N)|< 4,$ then $|\pi(M/N)|=|\pi(G/N)|=3,$  which implies that
$|\tau|=|\rho(G)|-|\pi(G/N)|=3.$ Thus we need to show that there is no edge among primes in $\tau.$
This can be proved using the same argument as in the proof of Claim~1 in Lemma~\ref{nontrivial
tau}.

{\bf Claim~2.} $M/N\cong\PSL_2(q),$ where $q=p^f\geq 11,$ $p$ is a prime and $f\geq 1.$

If $|\pi(G/N)|\geq 5,$ then the result follows from Lemma~\ref{almost simple groups}. Hence, we
assume that $|\pi(G/N)|\leq 4.$ Since $4\leq |\pi(M/N)|\leq |\pi(G/N)|\leq 4,$ we obtain that
$|\pi(G/N)|=|\pi(M/N)|=4,$ and so $|\tau|=2.$ Writing $\tau=\{r,s\}.$ As $r$ and $s$ are not joined
to each other by Claim~1, $\{r,s\}$ is not an edge of any triangle in $\Delta(G)$ and thus the
subgraph of $\Delta(G)$ on $\pi(G/N)$ has no triangle and so $\Delta(G/N)$ has no triangle. Now the
result follows from \cite[Lemma~3.2]{Hung}.

{\bf Claim~3.} $\rho(G)=\pi(G/N).$

It suffices to show that $\tau=\emptyset.$ By way of contradiction, assume that $\tau$ is nonempty
and let $r\in\tau.$ By applying \cite[Lemma~4.2]{Hung}, we deduce that $\{r,p_1,p_2\}$ is the
vertex set of a triangle in $\Delta(G),$ where $\{p_1,p_2\}\subseteq \pi(M/N).$ Since $r$ has
degree three in $\Delta(G)$ and $r$ is not adjacent to any prime in $\tau,$ it must be adjacent to
some prime $p_3\in\pi(G/N).$ Hence $rp_3\mid\chi(1)$ for some $\chi\in\Irr(G).$ It follows that
$\pi(\chi(1))=\{r,p_3\}$ since $\{r,p_3\}$ is not an edge of any triangle in $\Delta(G).$ Let
$\psi\in\Irr(M)$ be an irreducible constituent of $\chi_M$ and let $\theta\in\Irr(N)$ be an
irreducible constituent of $\psi_N.$ Then $\theta$ is an irreducible constituent of $\chi_N.$ Since
$r\nmid |G/N|,$ we deduce that $r\mid \theta(1).$ As $|\pi(M/N)|\geq 4,$ we deduce that $\theta$ is
not extendible to $M$ and thus $\psi(1)/\theta(1)=p_3^a$ for some integer $a\geq 1.$

Assume first that $\theta$ is not $M$-invariant. Then $I=I_M(\theta)\lneq M.$ Let
$\phi\in\Irr(I|\theta)$ such that $\phi^M=\psi.$ We have that $\phi_N=e\theta$ for some integer
$e\geq 1.$ Now $\psi(1)=\phi^M(1)=|M:I|e\theta(1),$ so $|M:I|e=p_3^a.$ In particular, $|M:I|$ is a
power of $p_3.$ Using  \cite[Hauptsatz~II$.8.27$]{Huppert}, we can deduce that either $q=11$ and
$I/N\cong \textrm{A}_5$ with $|M:I|=11$ or $|M:I|=q+1$ is a power of $p_3$ and $I/N$ is the
normalizer in $M/N$ of a Sylow $p$-subgroup of $M/N\cong\PSL_2(q),$ where $q$ is a power of a prime
$p.$ In both cases, we have that $\gcd(|M:I|,|I:N|)=1$ and $I/N$ is nonabelian. Thus if $e>1$ and
$u\mid e$ is a prime, then since $e\mid |I/N|,$ $e\neq p_3,$ which is impossible. Therefore, $e=1,$
which implies that $\phi$ is an extension of $\theta$ to $I.$ Since $I/N$ is nonabelian, we can
find $\lambda\in\Irr(I/N)$ with $\lambda(1)>1.$ By Gallagher's Theorem and Clifford's Theory, we
obtain that $(\phi\lambda)^M\in\Irr(M)$ and so
$(\phi\lambda)^M(1)=|M:I|\lambda(1)\theta(1)\in\cd(M|\theta).$ Since $\lambda(1)>1$ and
$\lambda(1)\mid |I/N|,$ we can find a prime divisor $v$ of $\lambda(1)$ such that $v\neq p_3$ and hence
$\{r,p_3,v\}$ is the vertex set of some triangle in $\Delta(G),$ which is impossible.

Therefore, we can assume that $\theta$ is $M$-invariant but not extendible to $M.$ Since $q\geq
11,$ the Schur representation group of $M/N$ is $\SL_2(q)$ and thus by the theory of character
triple isomorphisms in \cite[Chapter~11]{Isaacs}, we deduce that $\theta(1)(q\pm 1)\in
\cd(M|\theta).$ In particular, $r$  is adjacent to every prime in $\pi(q^2-1).$ It follows that
$|\pi(q^2-1)|=3$ since $|\pi(M/N)|\geq 4$ and the degree of $r$ in $\Delta(G)$ is three. However,
as $q\geq 11$ is odd, we see that the subgraph of $\Delta(G)$ on $\{r\}\cup\pi(q^2-1)$ has two
triangles sharing a common edge, which is impossible.

\smallskip
{\bf  The final contradiction.}

By the previous claim, we have that  $|\pi(G/N)|=|\rho(G)|=6.$ By Lemma~\ref{almost simple groups},
$G=M$ and $G/N\cong\PSL_2(2^f)$ with $f\geq 10.$ Writing $\pi(2^f+\delta)=\{p_k\}_{k=1}^3$ and
$\pi(2^f-\delta)=\{p_k\}_{k=4}^5,$ where $\delta=\pm 1.$

Since $\Delta(G)$ is connected but $\Delta(G/N)$ is not, we deduce that $N$ is nontrivial. Observe
that in the graph $\Delta(G),$ the prime $2$ is adjacent to exactly one prime in $\{p_k\}_{k=1}^3,$
say $p_i.$ Hence, there exists $\chi\in\Irr(G)$ with $2p_i\mid \chi(1).$ Let $\theta\in\Irr(N)$ be
an irreducible constituent of $\chi_N.$ Since $\{2\}$ is isolated in $\Delta(G/N),$ we deduce that
$\theta\neq 1_N.$ As $\Delta(G)$ has no triangle which contains an edge with vertex set
$\{2,p_i\},$ we must have that $\pi(\chi(1))=\{2,p_i\}.$

Assume first that $\theta$ is not $G$-invariant and put $I=I_G(\theta).$ Then $I/N$ is a proper
subgroup of the simple group $G/N\cong\PSL_2(2^f).$ It follows that $|G:I|$ is divisible by the
index of some maximal subgroup of $G/N.$ Hence, $|G:I|$ is divisible by one of the numbers in
\eqref{eqn}. Now the first three possibilities cannot occur since $\chi(1)$ and hence $|G:I|$ is
divisible by only one odd prime. For the last case, if $f=2b,$ then the odd part of last index is
$2^f+1$ which is also divisible by two distinct odd primes; therefore, we can assume that
$f/b=n\geq 3.$ In this case, $(2^{2f}-1)/(2^{2b}-1)$ must be a power of $p_i,$ which is impossible
by applying \cite[Lemma~2.4]{Hung} since $f\geq 10.$

Assume now that $\theta$ is $G$-invariant. Since the Schur multiplier of $G/N\cong \PSL_2(2^f)$
with $f\geq 10$ is trivial, we deduce from \cite[Theorem~11.7]{Isaacs} that $\theta$ is extendible
to $\theta_0\in\Irr(G).$ By applying Gallagher's Theorem \cite[Corollary~6.17]{Isaacs}, we have
that \[\chi(1)\in\{\theta_0(1)=\theta(1),2^f\theta(1),(2^f+ \delta)\theta(1),(2^f-
\delta)\theta(1)\}\subseteq\cd(G).\] As $\chi(1)$ is divisible by only one odd prime, we deduce
that $\chi(1)=\theta(1)$ or $2^f\theta(1).$ In both cases, we deduce that $p_i\mid \theta(1)$ and
thus $(2^f-\delta)\theta(1)$ is divisible by $p_4,p_5$ and $p_i,$ and so these three primes form a
triangle in $\Delta(G),$ which is impossible. The proof is now complete.
\end{proof}

\begin{proof}[\bf Proof of Theorem~A]
Let $G$ be a group with prime graph $\Delta(G).$ If $\Delta(G)$ is a complete square, then it is a
cubic graph. Conversely, if $\Delta(G)$ is a cubic graph, then it is a complete square by
Theorems~\ref{3-regular solvable graph} and \ref{3-regular nonsolvable graph}.
\end{proof}
 \section{Examples}
Let $H$ and $K$ be groups such that $\rho(H)\cap \rho(K)=\emptyset$ and both $\Delta(H)$ and
$\Delta(K)$ are disconnected graphs of order two. Let $G=H\times K.$ Then $\Delta(G)$ is a square
and $G$ is solvable. Conversely, every group whose prime graph is a square must be a direct product
of two groups $H$ and $K$ satisfying the properties above. (See \cite[Corollary~C]{Lewis-White13}).

\medskip
Based on this example, for each even integer $k\geq 4,$ we can construct a solvable group $G$ whose
prime graph is $k$-regular with $|\rho(G)|=k+2.$  Writing $k=4\ell+r$ with $r\in\{0,2\}$ and let
$n=k+2.$ Let $G_i,i=1,2,\cdots, \ell,$ be groups whose prime graphs are squares such that
$\rho(G_i)\cap \rho(G_j)=\emptyset,$ for all $1\leq i\neq j\leq \ell.$ Let $G_0$ be a group whose
prime graph $\Delta(G_0)$ is a square if $r=2$ and is a disconnected graph with two vertices if
$r=0,$ where $\rho(G_0)\cap (\cup_{i=1}^\ell \rho(G_i))=\emptyset.$ It follows that
$|\rho(G_0)|=r+2.$ Let $G=\prod_{i=0}^\ell G_i$ be the direct product of all $G_i's,$ for $0\leq
i\leq \ell.$ Then $|\rho(G)|=r+2+4\ell=k+2$ and  it is not hard to verify that $\Delta(G)$ is
$k$-regular and since each $G_i, 0\leq i\leq \ell$ is solvable, we deduce that $G$ is solvable.
Fig.~\ref{Fig5} gives an example of a $4$-regular graph of order six constructed in this way.

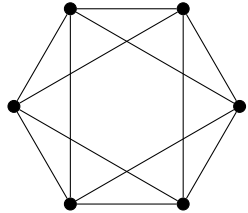
\begin{figure}[ht]
\begin{center}
\[\begin{tikzpicture}[x=1.5cm, y=1.5cm]
    \foreach \i in {1, 2, 3, 4, 5, 6} {
        \setcounter{Angle}{\i * 360 / 6}
        \vertex[fill] (p\i) at (\theAngle:1) [label=\theAngle:]{};
    }
    \path
        (p1) edge (p2)
        (p2) edge (p3)
        (p3) edge (p4)
        (p4) edge (p5)
        (p5) edge (p6)
        (p6) edge (p1)
        (p1) edge (p3)
        (p1) edge (p5)
        (p2) edge (p4)
        (p2) edge (p6)
        (p3) edge (p5)
        (p6) edge (p4)

    ;
\end{tikzpicture}\]

\end{center}
\caption{A quartic graph of order six}\label{Fig5}
\end{figure}

In view of Theorem~A, Proposition~\ref{regular graphs of small valency} and the examples above, we
formulate the following conjecture.
\begin{conjecture} Let $G$ be a group and let $k\geq 2$ be an integer. Suppose that $\Delta(G)$ is $k$-regular. Then
\begin{enumerate}
\item[$(1)$] If $k\geq 5$ is odd, then $\Delta(G)$ is a complete graph of order $k+1.$
\item[$(2)$] If $k\geq 4$ is even, then $\Delta(G)$ is either a complete graph of order $k+1$ or a $k$-regular graph of order $k+2.$
\item[$(3)$] If $|\rho(G)|=k+2,$ then $G$ is solvable.
\end{enumerate}
\end{conjecture}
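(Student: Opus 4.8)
The plan is to generalise the two-track argument behind Theorem~A, replacing the degree bound $d=3$ used throughout Sections~\ref{sec3}--\ref{sec4} by $d=k$, and to recast the three assertions as a single dichotomy: a nonsolvable group with $k$-regular prime graph has $\Delta(G)\cong K_{k+1}$, while a solvable one has $\Delta(G)\cong K_{k+1}$ or else $k$ is even and $\Delta(G)$ is $k$-regular of order $k+2$. With this reformulation, part~$(3)$ is just the contrapositive of the nonsolvable statement: if $|\rho(G)|=k+2>k+1$ then $\Delta(G)\neq K_{k+1}$, so $G$ cannot be nonsolvable. The opening reductions transfer verbatim: by Lemma~\ref{disconnected regular graphs} the graph is connected, if it is not $K_{k+1}$ then it is $K_{k+1}$-free, Corollary~\ref{Upper bound} gives $|\rho(G)|\le 3k$ (and $\le 2k$ when $G$ is solvable), and Lemma~\ref{hand shaking} forces $|\rho(G)|$ even when $k$ is odd. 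Since $k+2$ is odd for odd $k$, this parity remark is exactly what confines the exceptional family to even $k$, and it already settles $k=2,3$ via Proposition~\ref{regular graphs of small valency} and Theorem~A; so one may assume $k\ge 4$ and must exclude every order $n$ with $k+3\le n\le 3k$ for non-complete realisable graphs.

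For the nonsolvable track I would first upgrade Lemmas~\ref{characteristic simple} and~\ref{solvable radical}. Their proofs invoke only Ito--Michler, the completeness theorem \cite{Lewis-McVey}, and \cite[Lemma~4.2]{Hung}, together with a degree count; running the same count with the bound $d=k$ still yields that, for $N$ the solvable radical of $G$, the quotient $G/N$ is almost simple with nonabelian simple socle $M/N$ once $|\rho(G)|\ge 5$. The substantive new ingredient is a general-$k$ analogue of Lemma~\ref{simple groups}, classifying the nonabelian simple $S$ whose prime graph is $K_{k+1}$-free with all degrees at most $k$. Using White's determination of $\Delta(S)$ for every simple $S$ \cite{White,White13}, the only family persisting as $k$ grows should again be $\PSL_2(q)$, whose graph is a union of the cliques on $\{p\}$, $\pi(q-1)$ and $\pi(q+1)$ and is thus $K_{k+1}$-free exactly when $|\pi(q\pm1)|\le k$, so that $q$ may involve arbitrarily many primes. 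One then repeats the attachment analysis of Lemma~\ref{nontrivial tau} and Theorem~\ref{3-regular nonsolvable graph}: the primes of $\tau=\rho(G)-\pi(G/N)$ lie in $\rho(N)$, \cite[Theorem~A]{White13} controls how they join to $\pi(q^2-1)$, and the list~\eqref{eqn} of maximal-subgroup indices \cite[Hauptsatz~II$.8.27$]{Huppert} forbids the prime-power indices that a minimal attachment would require, the net effect being to force a vertex of degree exceeding $k$, a contradiction.

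For the solvable track the sharper bound $\alpha(\Delta(G))\le 2$ of Lemma~\ref{Independent set} shows that the complement $\overline{\Delta(G)}$ is triangle-free; if $\Delta(G)$ is $k$-regular of order $n$ and not complete, then $\overline{\Delta(G)}$ is $(n-1-k)$-regular and triangle-free with at least one edge. The goal is to prove $n-1-k=1$, i.e. that $\overline{\Delta(G)}$ is a perfect matching, which forces $n=k+2$ and recovers precisely the direct products $\prod_i G_i$ of the Examples section (whose complement is the disjoint union of the within-factor non-edges, hence a matching). Combinatorics alone does not suffice — a $C_5$-, Petersen- or bipartite-regular complement is triangle-free and regular yet of higher valency — so the surviving input must be character-theoretic: one would extend the direct-product characterisation of \cite{Lewis-White13} and the component results of \cite{Lewis08} to show that a solvable group whose prime graph has independence number $2$ and constant degree must, up to the join operation realised by direct products, have its non-edges form a matching.

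The principal obstacle is precisely this last step, together with the fact that the $k=3$ proof relied on the hand-enumeration of cubic graphs in Proposition~\ref{classification of 3-regular graphs}, which does not scale: the number of $k$-regular $K_{k+1}$-free graphs of order between $k+3$ and $3k$ with independence number at most $3$ grows rapidly, so no figure-by-figure exclusion is feasible for general $k$. What is needed is a uniform structural theorem — either a rigidity statement forcing a realisable $k$-regular non-complete prime graph to have complement a matching in the solvable case, or a clean classification of simple socles forcing the $\PSL_2(q)$ alternative in the nonsolvable case — so that all intermediate orders are ruled out simultaneously rather than one graph at a time. I expect the solvable rigidity statement, which has no purely combinatorial counterpart (witness the self-complementary pentagon $C_5$, excluded only by \cite[Theorem~C]{Hung}), to be the genuinely hard core.
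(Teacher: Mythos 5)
You should note first that the paper contains no proof of this statement: it is stated, and intended, as an open conjecture, supported only by Theorem~A (the case $k=3$), Proposition~\ref{regular graphs of small valency} ($k\leq 2$), and the direct-product constructions of the Examples section realizing $k$-regular graphs of order $k+2$ for even $k$. Your proposal is candid that it is a program rather than a proof, and indeed it has genuine gaps beyond the one you flag. The most concrete failure is the claim that Lemmas~\ref{characteristic simple} and \ref{solvable radical} ``transfer verbatim'' with $d=k$: their proofs are tight to $d=3$. In Lemma~\ref{characteristic simple}, $K_4$-freeness forces the complete graph $\Delta(G/C)$ to be a triangle, and the contradiction comes from a vertex $p_m$ acquiring degree $4$; with $K_{k+1}$-freeness you only get $|\pi(G/C)|\leq k$, and two extra primes $r,s$ each adjacent to two vertices of a clique of size up to $k$ produce no vertex of degree exceeding $k$. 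The same problem occurs in Lemma~\ref{solvable radical}, whose case analysis rests on the degree of $2$ being at most $3$, giving $|\rho(C/N)\cup\pi(M/N)|\leq 4$; with degree bound $k$ the bound becomes $k+1$ and the two-case argument does not close. So even the reduction to an almost simple quotient over the solvable radical needs a new idea for general $k$.

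Second, your asserted analogue of Lemma~\ref{simple groups} --- that ``the only family persisting as $k$ grows should again be $\PSL_2(q)$'' --- is false as stated: for large $k$, \emph{every} simple group $S$ with $|\pi(S)|\leq k$ and complete prime graph is $K_{k+1}$-free with all degrees at most $k-1$, so the candidate list explodes rather than stabilizes (Suzuki groups with $|\pi((q^2-1)(q^4+1))|\leq k$ also survive, among many others). The attachment analysis of Lemma~\ref{nontrivial tau} likewise exploited very specific features of the hand-enumerated Figures~\ref{Fig2}--\ref{Fig4} (e.g.\ which triangles share edges) and the list \eqref{eqn} for $\PSL_2(2^f)$ only; none of this is available once the socle can be almost anything. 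Finally, as you yourself identify, the solvable core --- that a non-complete $k$-regular prime graph of a solvable group must have complement a perfect matching --- has no argument at all, and cannot have a purely combinatorial one (the complement of the Petersen graph is $6$-regular of order $10$ with independence number $2$, so it passes P\'alfy's Condition yet violates the conjecture; it must be excluded by character theory). Since Corollary~\ref{Upper bound} only caps $|\rho(G)|$ at $2k$ (solvable) or $3k$, the family of surviving candidate graphs grows rapidly with $k$, and the figure-by-figure exclusion of Proposition~\ref{classification of 3-regular graphs} does not scale, exactly as you say. In short: your reformulation of part $(3)$ as a consequence of a nonsolvable rigidity statement is sound, and your outline is a reasonable map of where the difficulties lie, but neither of the two tracks is carried out, and the steps you claim transfer from the paper in fact do not.
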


\section*{Acknowledgment} The author is grateful to the referee for his or her corrections and
suggestions and to Michael Henning for his help during the preparation of this work.

\end{document}